\numberwithin{equation}{section}
\def\pasdegrille{\let\grille = \pasgrille}
\def\ecriture#1#2{\setbox1=\hbox{#1}
\dimen1= \wd1
\dimen2=\ht1
\dimen3=\dp1
\grille #2 \box1 }
\def\aat#1#2#3{
\divide \dimen1 by 48
\dimen3=\dimen1
\multiply \dimen1 by #1
\advance \dimen1 by -\dimen3
\divide \dimen1 by 101
\multiply \dimen1 by 100
\divide \dimen2 by \count11
\multiply \dimen2 by #2
\setbox0=\hbox{#3}\ht0=0pt\dp0=0pt
  \rlap{\kern\dimen1 \vbox to0pt{\kern-\dimen2\box0\vss}}\dimen1= \wd1
\dimen2=\ht1}
\def\pasgrille{
\count12= \dimen1
\divide \count12 by 50
\divide \dimen2 by \count12
\count11 =\dimen2
\
\divide \dimen1 by 48
\setlength{\unitlength}{\dimen1}
\smash{\rlap{\ }}
\dimen1= \wd1
\dimen2=\ht1
}
\def\grille{
\count12= \dimen1
\divide \count12 by 50
\divide \dimen2 by \count12
\count11 =\dimen2
\
\divide \dimen1 by 48
\setlength{\unitlength}{\dimen1}
\smash{\rlap{\graphpaper[1](0,0)(50, \count11)}}
\dimen1= \wd1
\dimen2=\ht1
}
\newtheorem{theoreme}{Theorem}
\newtheorem{proposition}{Proposition}
\newtheorem{lemme}[proposition]{Lemma}
\newtheorem{corollaire}[proposition]{Corollary}
\newtheorem{remarque}[proposition]{Remark}
\numberwithin{equation}{section}
\numberwithin{proposition}{section}
\def\no{\|}
\def\11{{\rm 1~\hspace{-1.4ex}l} }
\newcommand{\la}{\lambda}
\newcommand{\beq}{\begin{equation}}
\newcommand{\eeq}{\end{equation}}
\newcommand{\ben}{\begin{eqnarray}}
\newcommand{\een}{\end{eqnarray}}
\newcommand{\beno}{\begin{eqnarray*}}
\newcommand{\eeno}{\end{eqnarray*}}
\def\R{\mathbb R}
\def\T{\mathbb T}
\def\ba{\begin{aligned}}
\def\ea{\end{aligned}}
\def\be{\begin{equation}}
\def\ee{\end{equation}}
\def\ben{\begin{align*}}
\def\enn{\end{align*}}
\def\R{\mathbb{R}}
\def\T{\mathcal{T}}
\def\ra{\rightarrow}
\def\a{\alpha}
\def\ld{\lambda}
\def\sg{\sigma}
\def\w{\omega}
\def\e{\varepsilon}
\def\d{\delta}
\def\no{\nonumber}
\def\q{\quad}
\def\lt{\left}
\def\rt{\right}
\begin{document}
\selectlanguage{english}
\title[3D Kakeya inequality]
{ On Wolff's $L^{\frac{5}{2}}-$Kakeya maximal inequality in $\R^3$
}

\author{Changxing Miao}
\address{Institute of Applied Physics and Computational Mathematics\\}
\email{miao\_{}changxing@iapcm.ac.cn}
\author{Jianwei Yang}
\address{The Graduate School of China Academy of Engineering Physics\\} \email{geewey.young@gmail.com}
\author{Jiqiang Zheng}
\address{The Graduate School of China Academy of Engineering Physics.}
\email{zhengjiqiang@gmail.com}

\begin{abstract}
We reprove Wolff's $L^{\frac{5}2}-$ bound for the $\R^3-$Kakeya maximal function
without appealing to the argument of induction on scales.
The main ingredient in our proof is an adaptation of  Sogge's strategy
used in the work on Nikodym-type sets in curved spaces.
Although the equivalence between these two type maximal functions
is well known, our proof may shed light on some new geometric observations
which is interesting in its own right.
\end{abstract}
\maketitle
\selectlanguage{english}
\tableofcontents

\noindent {\bf Mathematics Subject Classification
(2000):}\quad 42B25 \\
\noindent {\bf Keywords:}\quad   Kakeya maximal
function, multiplicity argument, geometric combinatorics.

\section{Introduction }

Let $\d>0,\ \xi\in S^2,\ a\in \R^3$. Define a $\d-$tube centered at $a$ in direction of
$\xi$ as
$$
T^\d_\xi(a)=\Bigl\{x\in \R^3\,\Big|\;|(x-a)\cdot\xi|\leq \frac{1}2,\ |(x-a)^\bot|\leq \d\Bigr\},
$$
where $x^\bot=x-(x\cdot\xi)\xi$ and $S^2$ denotes the standard unit
two sphere in $\R^3$.

Let $f:\R^3\ra \mathbb{C}$ be a locally integrable function and
define the Kakeya maximal operator as
\begin{equation}\label{def-kekaya}
    f^*_\d(\xi)=\sup_{a\in \R^3}\frac{1}{|T^\d_{\xi}(a)|}\int_{T^\d_{\xi}(a)}|f(x)|dx.
\end{equation}
we naturally extend this definition homogeneously by letting
$$
f^*_\d(\eta)=f^*_\d\bigl(\frac{\eta}{|\eta|}\bigr), \forall~
\eta\neq 0.
$$
In particular, we have
for $\ld>0$,
$$f^*_\d(\ld\xi)\mathrel{\mathop=^{\rm def}}f^*_\d(\xi),\;\xi\in S^2.$$
A longstanding conjecture about the Kakeya maximal function is for $1\leq p\leq 3$
\begin{equation}\label{k-conj}
    \|f^*_\d\|_{L^p(S^2)}\lesssim_\e\d^{-\frac{3}{p}+1-\e}\|f\|_{L^p(\R^3)},\q\forall\,\e>0.
\end{equation}
This implies immediately the Kakeya sets  in $\R^3$ have full
Hausdorff dimension.

If $p=1$, \eqref{k-conj} becomes trivial since
$$
\| f^*_\d\|_{L^1(S^2)}\leq |S^2|\|f^*_\d\|_{L^\infty(S^2)}\lesssim\d^{-2}\|f\|_{L^1}.
$$
By interpolation, \eqref{k-conj} is equivalent to the end-point estimate
\begin{equation}\label{k-conj-end}
     \|f^*_\d\|_{L^3(S^2)}\lesssim_\e \d^{-\e}\|f\|_{L^3(\R^3)}.
\end{equation}
\begin{remarque}
In general, the conjecture about the estimates on Kakeya maximal function asserts that for all dimensions there holds
\begin{equation}\label{k-conj-end}
     \|f^*_\d\|_{L^d(S^{d-1})}\lesssim_\e \d^{-\e}\|f\|_{L^d(\R^d)}.
\end{equation}
Consequently, this implies the Hausdorff dimension of Kakeya sets in $\R^d$ should be exactly $d$.
For later use, we define $C_{\d,d}$ to be
\begin{equation}\label{C}C_{\d,d}=\sup_{\|f\|_{L^2}\neq0}\|f^*_\d\|_{L^{2}(S^{d-1})}/\|f\|_{L^2(\R^d)}.\end{equation}
\end{remarque}
In the case when $d=2$, \eqref{k-conj-end} is valid (see \cite{ref Bourgain1} and \cite{ref Cordoba}).
However for $d\geq3$, the question remains open and becomes extremely difficult. At the early stages,
some primitive results with $p=\frac{d+1}2$ can be deduced easily, see \cite{ref Bourgain3},
 \cite{ref Christ. D. R}, \cite{ref K-T2} and \cite{ref Wolff}.
The breakthrough in this direction was obtained by Bourgain \cite{ref Bourgain1}
through establishing an inductive formula for the $L^p-$ estimates on Kakeya maximal functions with $p=\frac{d+1}2+c_d$
and $0<c_d<\frac12$. This result was improved by Wolff \cite{ref Wolff} to $p=\frac{d+2}{2}$.
Several subsequent progresses on $d\geq4$ were made by Bourgain \cite{ref Bourgain2}, Katz and Tao \cite{ref K-T} and Tao-Vargas-Vega \cite{ref T-V-V}.
 We refer to the investigations in \cite{ref Bourgain3}, \cite{ref K-T2}, \cite{ref Tao 1} and \cite{ref wolff2}
for further references and historical remarks.

In this paper, we focus on the three dimensional case.
The best result in $\R^3$ is hitherto due to Wolff \cite{ref Wolff}.

\begin{theoreme}[T. Wolff, 1995]\label{thm 1}
The Kakeya maximal function \eqref{def-kekaya} satisfies the following estimate
\begin{equation}\label{wolff}
    \| f^*_\d\|_{L^{\frac{10}{3}}(S^2)}\lesssim_\e\d^{-\frac{1}{5}-\e}\|f\|_{L^{\frac{5}{2}}(\R^3)}.
\end{equation}
\end{theoreme}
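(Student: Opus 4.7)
The plan is to dualize the maximal inequality into a geometric incidence statement for $\delta$-tubes with $\delta$-separated directions, and then execute a hairbrush/bush argument organised around a plane-slab decomposition in the spirit of Sogge's work on Nikodym sets. No induction on scales will enter: every step is carried out at the single scale $\delta$.

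\textbf{Step 1: Dualization and dyadic reduction.} By duality between $L^{10/3}(S^2)$ and $L^{10/7}(S^2)$ together with the standard linearization that replaces $f^*_\delta$ by a measurable selection of tubes, \eqref{wolff} is equivalent to an $X$-ray type bound of the shape
\[
\Bigl\|\sum_{j=1}^N \chi_{T_j}\Bigr\|_{L^{5/3}(\R^3)} \lesssim \delta^{2-\tfrac15-\epsilon}\,N^{7/10},
\]
taken over families $\{T_j\}$ of $\delta$-tubes whose directions $\omega_j \in S^2$ are $\delta$-separated. A dyadic pigeonhole on the multiplicity $\mu(x)=\sum_{j}\chi_{T_j}(x)$ then reduces matters to the combinatorial claim that, for each dyadic level $m$, the superlevel set $X_m=\{\mu\sim m\}$ obeys $m^{5/2}|X_m|\lesssim\delta^{3-\epsilon}N^{7/3}$, which is a purely incidence-geometric inequality.

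\textbf{Step 2: Rich tube and bush.} A second pigeonhole lets me assume each $T_j$ meets $X_m$ in a set of measure $\sim\alpha|T_j|$ for a single $\alpha\sim |X_m|m/(N\delta^2)$. Select a "typical" tube $T_0$ around which the number of tubes $T_j$ meeting $T_0\cap X_m$ is at least the average, and form the bush $\mathcal{B}=\{T_j:T_j\cap T_0\neq\emptyset\}$. A volume computation along $T_0$ produces the lower bound $|\mathcal{B}|\gtrsim m\alpha\delta^{-1}$, so the only remaining task is a matching upper bound.

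\textbf{Step 3: Sogge's plane-slab decomposition.} Partition $\mathcal{B}$ dyadically by the angle $\theta\in[\delta,1]$ that $T_j$ makes with $T_0$. For each $\theta$, the tubes in the $\theta$-subcollection essentially lie in the $\delta$-thickening of a plane through $T_0$, that is, a slab of thickness $\sim\delta$ and cross-section $\sim\theta$. Within any such slab these tubes all meet $T_0$ and have $\delta$-separated directions, so the planar Córdoba inequality (which at $d=2$ is already known) bounds their contribution to the multiplicity by $O(\log(1/\delta))$; summing over the $\sim\theta/\delta$ plane-slabs sharing $T_0$ and then dyadically over $\theta$ gives the hairbrush bound $|\mathcal{B}|\lesssim\delta^{-2-\epsilon}\alpha^{-1}$. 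Combining with Step~2 closes the incidence inequality and hence, via Step~1, yields \eqref{wolff}.

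\textbf{Main obstacle.} The most delicate point is Step~3: tubes whose direction is nearly parallel or nearly coplanar with $T_0$ are not genuinely confined to any single plane-slab, and they must be handled by a separate geometric argument so that the planar Córdoba bounds can be added over slabs without accumulating a power of $\delta$. This is precisely where Sogge's multiplicity strategy, originally devised for Nikodym sets in curved geometries, must be adapted to the flat Kakeya setting; verifying that these "nearly coplanar" tubes do not erode the exponent $\tfrac{10}{3}$ is where the "new geometric observation" promised in the abstract is expected to do the decisive work, and it is also what permits the entire proof to be carried out at the single scale $\delta$.
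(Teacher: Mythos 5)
Your outline is the classical bush/hairbrush incidence argument, and both its numerology and its key step break down. First, the dualization in Step 1 is mis-stated: the correct $X$-ray form of \eqref{wolff} is $\bigl\|\sum_j\chi_{T_j}\bigr\|_{L^{5/3}}\lesssim_\epsilon\delta^{-1/5-\epsilon}(N\delta^{2})^{7/10}=\delta^{6/5-\epsilon}N^{7/10}$, whereas your bound $\delta^{2-\frac15-\epsilon}N^{7/10}$ already fails for $N\sim\delta^{-2}$ pairwise disjoint tubes (left side $\sim1$, right side $\sim\delta^{2/5}$), so the level-set target $m^{5/2}|X_m|\lesssim\delta^{3-\epsilon}N^{7/3}$ is not the inequality you need. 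Second, even taking Steps 2 and 3 at face value, they combine to $m\alpha^{2}\lesssim\delta^{-1-\epsilon}$, i.e. $m^{3}|X_m|^{2}\lesssim\delta^{3-\epsilon}N^{2}$; unwinding this gives bounds of the type $|E|\gtrsim\lambda^{3}M\delta^{3}$ rather than the paper's target \eqref{discrete_form}, $|E|\gtrsim\lambda^{5/2}\delta^{1/2+\epsilon}(M\delta^{2})^{3/4}$ — precisely the loss in the $\lambda$-exponent (the ``two-ends'' loss) that Wolff's induction on scales was invented to remove, so a single-scale hairbrush count of this shape cannot reach $L^{5/2}$ at the maximal-function level. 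Third, the upper bound $|\mathcal{B}|\lesssim\delta^{-2-\epsilon}\alpha^{-1}$ in Step 3 has no proof as written: C\'ordoba's planar estimate is an $L^{2}$ bound on $\sum_i\chi_{t_i}$, not a pointwise bound of the multiplicity by $O(\log\frac1\delta)$; inside a single $\theta$-slab the multiplicity along $T_0$ can be as large as $\theta/\delta$.

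What actually lets the paper avoid induction on scales is absent from your proposal, and it is not the ``nearly coplanar tubes'' you flag (those are handled by the slab decomposition together with the almost-orthogonality \eqref{y_ortho}). The engine is: (i) the dichotomy between the low-multiplicity scenario I and the high-multiplicity scenario II$_{\theta\sigma}$, obtained by pigeonholing simultaneously in the angle $\theta$ \emph{and} in the distance $\sigma$ to the axis $\gamma_j$ via the sets $\mathcal{I}_{\theta,\sigma}(x,j)$ of \eqref{I}; (ii) the weighted auxiliary maximal operator $A^{\theta}_{\delta,j}$ with weight $[\mathrm{dist}(y,\gamma_j\wedge\gamma_\xi)]^{1/2}$, whose $L^{2}(S^{2})$ norm is controlled by the two-dimensional Kakeya constant $\log\frac1\delta$ (Proposition \ref{induct}, Corollary \ref{coral_L2}); it is this $\mathrm{dist}^{1/2}$ weight that converts the $\sigma$-localization into the factor $(\sigma/\theta)^{1/2}\lambda$ in \eqref{net} and ultimately produces $\lambda^{5/2}$ instead of $\lambda^{3}$; and (iii) the bush estimate of Lemma \ref{lemma 4.2}, with its $M_0^{1/2}$ gain, combined in the key lemma of Section 4 to reduce \eqref{g_dis_form} to the generic condition \eqref{4.7}, which is then verified for $d=3$ using \eqref{3d_l2}. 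Your ``main obstacle'' paragraph defers exactly the step where this machinery (or some substitute for the two-ends reduction) is indispensable, so the proposal has a genuine gap and, as outlined, would only recover a strictly weaker estimate than \eqref{wolff}.
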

\begin{remarque}
From this estimate, \eqref{k-conj} follows immediately with $p=\frac5{2}$.
\end{remarque}

As discussed above, Wolff's approach combines the induction on
scales and the ideas from combinatorics. It belongs, on the whole,
to the category of geometric method, which is fairly efficient in
dealing with low dimensional cases as pointed out in \cite{ref
K-T2}. This work is aimed at better understanding the geometric
combinatorial behavior of the Kakeya maximal function in $\R^3$, and
the purpose of this article is to prove \eqref{wolff} without using
induction on scales. The main idea is inspired by Sogge's strategy
on Nikodym-type sets in 3-dimensional manifolds with constant
curvatures \cite{ref Sogge}. By exploring this method and combining
the ideas from Bourgain-Guth's  multilinear approach to oscillatory
integrals \cite{ref Bourgain Guth}, we believe it is possible to
obtain some  improvements on the known results of the Kakeya
problems.

In order to prove \eqref{wolff}, it suffices to show the following restricted weak type maximal estimate
(see \cite{ref Wolff} or the appendix)
\begin{equation}\label{wolf-red}
    \|f^*_\d\|_{L^{\frac{10}3,\infty}(S^2)}\lesssim_\e \d^{-\frac{1}5-\e}\|f\|_{L^{\frac{5}2,1}},
\end{equation}
which is the core of this paper.


This paper is organized as follows.
In Section 2, we introduce some terminologies
of the scheme on account of the multiplicities of the tubes associated to the discrete version of \eqref{wolf-red}.
In Section 3, we obtain an $L^2-$type estimate for an auxiliary maximal function in $\R^d$ in terms of the $(d-1)-$dimensional Kakeya maximal functions.
 Section 4 is devoted to a crucial Lemma  4.3, which reduces our ultimate goal \eqref{g_dis_form} to a generic condition \eqref{4.7}.
Finally, we verify this condition  for $d=3$ in Section 5 and
complete the proof of Theorem \ref{thm 1}.
 For the sake of self-completeness , we show the local property of the conjecture \eqref{k-conj-end} as well as the implication of
 \eqref{wolf-red} to \eqref{wolff} in the appendix.

\section{Preliminaries on the multiplicity argument}
As was discussed before, we only need to prove \eqref{wolf-red}. Since the problem is local \footnote{See \cite{ref Bourgain1}
or the  Appendix at the end of this paper.}, a standard averaging argument in \cite{ref Bourgain1} yields the equivalent form  of \eqref{wolf-red}
\begin{equation}\label{distri_est}
    \sigma\{\xi\in S^2:(\chi_E)^*_\d(\xi)\geq\lambda\}\lesssim_\e\lt(\lambda^{-\frac{5}2}\d^{-(\frac{1}2+\e)}|E|\rt)^{\frac{4}{3}},\,
     \forall\, \ld\in[\d,1],
\end{equation}
where $E$ is a subset of the unit ball $B(0,1)$.

Let $A_\ld=\{\xi\in S^2:(\chi_E)^*_\d(\xi)\geq\lambda\}$. By dividing $S^2$  into the finite union of caps, where the total number of
these caps is independent of $\d$, we may assume that $A_\ld$ is contained in a cap with the aperture angle less than one. The discretization
of \eqref{distri_est} is achieved by choosing
a maximal $\d-$separated subset $\{\xi^\nu\}^M_{\nu=1}$ of $ A_\ld$ such that \eqref{distri_est} is equivalent to
\begin{equation}\label{discrete_form}
    M\d^2\lesssim_\e\lt(\ld^{-\frac{5}{2}}\d^{-\frac{1}2-\e}|E|\rt)^{\frac{4}3},\q\forall\, \ld \in [\d,1].
\end{equation}

By definition of $(\chi_E)^*_\d$, we have for each
$\nu\in\{1,\ldots, M\}$, there is a tube
$T^\d_{\xi^\nu}(a_\nu):=T^\d_\nu$ satisfying $ |E\cap
T^\d_\nu|\geq\frac{\ld}{2}|T^\d_\nu|. $ We shall use these tubes to
set up our multiplicity argument. Since  this argument works for all
dimensions, we set it up in the sequel for general $d\geq 3$, and
apply it to the case $d=3$ at the end of our proof.

Notice that the higher dimensional counterpart of \eqref{wolff}
reads (see \cite{ref Wolff})
\begin{equation}\label{g_wolff}
    \|f^*_{\d}\|_{L^{\frac{(d-1)(d+2)}{d}}(S^{d-1})}\lesssim_\e\d^{-\frac{2d}{d+2}+1-\e}\|f\|_{L^\frac{d+2}{2}(\R^d)},
\end{equation}
the analogue for \eqref{discrete_form}  becomes for  $d\geq 3$
\begin{equation}\label{g_dis_form}
    M\d^{d-1}\lesssim_\e\lt(\d^{-\e}\frac{|E|\ld^{-p}}{\d^{d-p}}\rt)^{\frac{q}{p}},
\end{equation}
with $p=\frac{d+2}{2}$ and $q=\frac{(d-1)p}{p-1}$.

Now we introduce some preliminaries for the modified multiplicity argument. Fix $x\in B(0,1)\subset\R^d$ and
$j\in\{1,\ldots,M\}$.  We define for $\theta,\sigma\in [\d,1]$
\begin{align}\label{I}
\mathcal{I}_{\theta,\sigma}(x,j)\mathrel{\mathop=^{\rm def}}
&\Big\{i:\,\chi_{T^\d_i}(x)=1,\,\angle(T^\d_i,T^\d_j)\in \Big[\frac{\theta}{2},\theta\Big),
\\
\no&\q\q\q\q\q\q
\Big|T^\d_i\cap\Big\{y\in E:\text{dist}(y,\gamma_j)\in\Big[\frac{\sigma}{2},\sigma\Big)\Big\}\Big|
\geq\Big(2^4\log_2\frac{1}{\d}\Big)^{-1}\ld|T^\d_i|\Big\}.
\end{align}
where $\gamma_j$ is the central axis of the tube $T^\d_{j}$ and
$\angle(T^\d_i,T^\d_j):=\angle(\xi^i,\xi^j)$.

\begin{figure}[ht]
\begin{center}
$$\ecriture{\includegraphics[width=7cm]{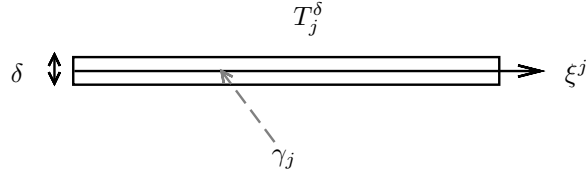}}
{\aat{-1}{7}{$\d$}\aat{23}{0}{$\gamma_j$}\aat{25}{12}{$T^\d_{j}$}\aat{50}{7}{$\xi^j$}}$$
\end{center}
\caption{$\gamma_j$ as the center of tube $T^\d_j$.}
\end{figure}

We consider the following two scenarios\footnote{See \cite{ref
wolff2} for the motivation from Szemeredi-Trotter's theorem.}.
\begin{itemize}
 \item I.(Low multiplicity scenario) Let $N_1$ be a nonnegative integer such that
 there are at least $\frac{M}{2}$ many $j$'s satisfying
$$
\Big|T^\d_j\cap E\cap\Big\{x\in \R^d:
\sum^M_{l=1,l\neq j}\chi_{T^\d_l}(x)\leq N_1\Big\}\Big|\geq
\frac{\ld}4|T^\d_j|;
$$
\end{itemize}
\begin{itemize}
 \item II$_{\theta\sigma}$.(High multiplicity at angle $\theta$ and
distance $\sigma$). Let $N_2$ be a nonnegative integer such that for
$\theta, \sigma\in[\d,1]$ and $\mathcal{I}_{\theta,\sigma}(x,j)$
defined as in \eqref{I}
\begin{align*}
 \text{Card}\Big\{j: \Bigl|T^\d_j\cap E\cap\Big\{x:\text{Card}\,\mathcal{I}_{\theta,\sigma}(x,j)\geq2^{-3}\Big(\log_2\frac{1}{\d}\Big)^{-2}N_2\Big\}
 \Bigr|
 \geq 2^{-3}\Big(2\log_2\frac{1}{\d}&\Big)^{-2}\ld|T^\d_j|\Big\}\\
 &\geq\frac{M}{2^4(\log_2\frac{1}{\d})^2}.
\end{align*}
\end{itemize}

It is easy to see that  $N_1\geq M$ is sufficient for scenario I.
If we denote by $N$ the smallest $N_1$ such that scenario I is valid, then there exist $\theta,\sigma\in[\d,1]$ such that
II$_{\theta\sigma}$ also holds  for $N_2=N$. Essentially, this is  achieved by using a dyadic pigeonhole principle. To see this,
by the minimality of $N$ and triangle inequality, we have at least $\frac M2+1$ many $j$'s such that
\begin{equation}\label{I'}
\Big|\mathcal{Q}^\d_{j}:=T^\d_j\cap E\cap \Bigl\{x:\sum^M_{l=1,l\neq j}\chi_{T^\d_l}\geq N\Bigr\}\Big|\geq\frac{\ld}{4}|T^\d_j|.
\end{equation}
For any $x\in\mathcal{Q}^\d_j$, we have
\begin{equation}
\label{lowerbound}\sum^M_{l=1,l\neq j}\chi_{T^\d_l}(x)\geq N,
\end{equation}
and
\begin{equation}\label{claim1}
    \Bigl\{k: k\neq j,\,x\in T^\d_k\Bigr\}\subset\bigcup^{[\log_2\frac{1}{\d}]+1}_{\nu=1}\Bigl\{i:x\in T^\d_i,\angle(T^\d_j,T^\d_i)\in [2^{\nu-1}\d,
    2^{\nu}\d)\Bigr\}.
\end{equation}
On the other hand, we claim that
\begin{align}\label{claim2}
    &\{k: k\neq j,\,x\in T^\d_k\}\\
    \nonumber&\subset\bigcup^{[\log_2\frac{2}\d]}_{\nu'=1}\Big\{i:x\in T^\d_i,\Big|T^\d_i\cap\{y\in E:\text{dist}(y,\gamma_j)\in [2^{\nu'-1}\d,
    2^{\nu'}\d)\}\Big|\geq \bigl(2^4\log_2\frac{1}\d\bigr)^{-1}\ld|T^\d_i|\Big\}.
\end{align}
On account of \eqref{claim1} and \eqref{claim2}, we may write
\begin{align}
    \nonumber&\{k: k\neq j,\,x\in T^\d_k\}\\
    \nonumber&\subset\bigcup^{[\log_2\frac{1}\d]+1}_{\nu=1}\bigcup^{[\log_2\frac{2}\d]}_{\nu'=1}\Big(\{i:x\in T^\d_i,\angle(T^\d_j,T^\d_i)\in
    [2^{\nu-1}\d,2^{\nu}\d)\}\\
    \nonumber&\q\q\q\q\q\q\q\cap\Big\{i:x\in T^\d_i,\Big|T^\d_i\cap\{y\in E:\text{dist}(y,\gamma_j)\in [2^{\nu'-1}\d,2^{\nu'}\d)\}\Big|\geq
    \bigl(2^4\log_2\frac{1}\d\bigr)^{-1}\ld|T^\d_i|\Big\}\Big)\\
    \nonumber&\subset\bigcup^{[\log_2\frac{1}\d]+1}_{\nu=1}\bigcup^{[\log_2\frac{2}\d]}_{\nu'=1}\mathcal{I}_{2^\nu\d,2^{\nu'}\d}(x,j).
\end{align}
In view of \eqref{lowerbound}, we have at least $N$ many tubes $T^\d_k$ containing $x$ such that $k\neq j$. By choosing $\d\ll0.01$, we have
$$N\leq 2^3 \Big(\log_2\frac{1}{\d}\Big)^2\sup_{\substack{1\leq \nu\leq[ \log_2\frac{1}\d]+1\\1\leq \nu'\leq [\log_2{\frac{2}{\d}}]}}\text{Card}\;
\mathcal{I}_{2^\nu\d,2^{\nu'}\d}(x,j).$$
Therefore, there are $\nu$ and $\nu'$, which may depend on $x$ and $j$, such that
$$\text{Card}\;\mathcal{I}_{2^\nu\d,2^{\nu'}\d}(x,j)\geq 2^{-3}\Big(\log_2\frac{1}{\d}\Big)^{-2}N.$$
From the above discussions, we have
$$
\mathcal{Q}^\d_{j}\subset\bigcup^{[\log_2\frac{1}\d]+1}_{\nu=1}\bigcup^{[\log_2\frac{2}{\d}]}_{\nu'=1}\Big(T^\d_j\cap E\cap \Bigl\{x:\text{Card}\;
\mathcal{I}_{2^\nu\d,2^{\nu'}\d}(x,j)\geq 2^{-3}\Big(\log_2\frac{1}{\d}\Big)^{-2}N\Bigr\}\Big),
$$
which, by \eqref{I'}, yields
$$
\frac{\ld}4|T^\d_j|\leq 2^3\Big(\log_2\frac{1}\d\Big)^2\sup_{\nu,\nu'}\Big|T^\d_j\cap E\cap \Bigl\{x:\text{Card}\;\mathcal{I}_{2^\nu\d,2^{\nu'}\d}(x,j)
\geq 2^{-3}\Big(\log_2\frac{1}{\d}\Big)^{-2}N\Bigr\}\Big|.
$$
Consequently, we have found $\nu=\nu(j)$ and $\nu'=\nu'(j)$ such that
\begin{equation}\label{pigeon}
\Big|T^\d_j\cap E\cap \Big\{x:\text{Card}\;\mathcal{I}_{2^\nu\d,2^{\nu'}\d}(x,j)\geq2^{-3}\Big(\log_2\frac{1}{\d}\Big)^{-2}N\Big\}\Big|\geq 2^{-3}
\ld\Bigl(2\log_2\frac1\d\Bigr)^{-2}|T^\d_j|
\end{equation}

Since there are at most $2^4\Big(\log_2\frac{1}\d\Big)^2$ many pairs of $(\nu,\nu')$'s and at least $\frac M2+1$ many $j$'s as in \eqref{pigeon},
by pigeonhole's principle there is a pair $(\nu_0,\nu'_0)$ such that II$_{\theta\sigma}$ holds for $\theta=2^{\nu_0}\d$ and $\sigma=2^{\nu'_0}\d$.

It remains  to prove \eqref{claim2}. For $k\neq j$, we have
\begin{align*}
\frac{\ld}2|T^\d_k|
\leq|T^\d_k\cap E|\leq 2&\sum^{[\log_2\frac{2}{\d}]}_{\nu'=1}\Bigl|T^\d_k\cap E\cap\Bigl\{y:\text{dist}(y,\gamma_j)\in[2^{\nu'-1}\d,2^{\nu'}\d)\Bigr\}
\Bigr|\\
\leq 8& \log_2{\frac{1}{\d}}\q\sup_{\nu'}\Big|T^\d_k\cap E\cap\Bigl\{y:\text{dist}(y,\gamma_j)\in[2^{\nu'-1}\d,2^{\nu'}\d)\Bigr\}\Big|
\end{align*}
where we have used the fact that $k\neq j$ implies $\angle (T^\d_k,T^{\d}_j)\geq c \d$ for some $c>0$ suitably large.
Thus \eqref{claim2} follows.

\begin{remarque}The high and low multiplicity scenarios for tubes was first exploited by
Wolff \cite{ref Wolff}. This along with the the argument of induction on scales improves significantly the bound on Kakeya type maximal functions.
The modified version in the above form was in spirit of  Sogge \cite{ref Sogge}. Combining this with an $L^2-$estimate for an auxiliary maximal function,
 one may establish the Nikodym type maximal inequality in curved background with constant curvatures.
\end{remarque}

\section{An auxiliary maximal function inequality}

Let $\gamma_j$ be the central axis of $T^\d_j$ as shown in Figure 1.
We may assume without loss of generality that $\gamma_j$ is parallel
to $ e_1$, where $\{e_1,e_2,\ldots,e_d\}$ is the orthogonal normal
basis of $\R^d$. For $y\in\R^d$, denote by $y=(y_1,y')$ with
$y'=(y_2,\ldots,y_d)$. In this section, we always assume that $f$ is
an integrable function $\R^d$ supported in the hollow cylinder
$\{y\in\R^d: |y_1|\leq 1,\, \frac{\sg}{2}\leq|y'|\leq\sg\}$.

For any $\xi \in A_\ld$ and a tube $T_{\xi}^{\delta}$ in the
direction of $\xi$ such that  $\angle(\xi,\xi^j)>0$ and
$T^\d_{j}\cap T^\d_{\xi}\neq\emptyset$, there is a unique point
$q=q(j,\,\xi)$ such that
\begin{equation}\label{q}\text{dist}(q,\gamma_j)+\text{dist}(q,\gamma_\xi)
=\min_{x\in\R^d}\Bigl[\text{dist}(x,\gamma_j)+\text{dist}(x,\gamma_\xi)\Bigr],\end{equation}
 where $\gamma_\xi$
is the central axis of the tube $T^\d_\xi$ in the direction $\xi$.
We denote by $\gamma_j\wedge\gamma_\xi$ the point $q$ such that
\eqref{q} holds. Let
$\w^{j}_\xi(y)=\big[\text{dist}(y,\gamma_j\wedge\gamma_\xi)\big]^\frac{1}{2}$.
For brevity, we  write $\w^j_{\xi^\nu}$ and $\gamma_{\xi^\nu}$
respectively as $\w^j_{\nu}$ and $\gamma_{\nu}$.

Define the auxiliary maximal function as
$$
A^\theta_{\d,j}(f)(\xi)=\sup_{\substack{T^\d_j\cap T^\d_\xi\neq\emptyset
\\\angle(T^\d_j,T^\d_\xi)\in[\frac{\theta}2,\theta]}}
\frac{1}{|T^\d_\xi|}\int_{T^\d_\xi}|f(y)|\w^j_\xi(y)dy,
$$
We define $A^\theta_{\d,j}(f)(\xi)$ to be zero if
$\angle(T^\d_j,T^\d_\xi)$ is outside the interval
$[\frac{\theta}2,\theta]$.

The difference between this auxiliary maximal function and $f^*_\d$ is that the supremum is taken under more  constraints for the tubes
in direction of $\xi$. Besides, we put a weight function for technical reasons. On one hand, it is clear that
$A^\theta_{\d,j}(f)(\xi)\lesssim f^*_\d(\xi)$ when $f$ is supported in a unit ball. On the other hand, a more interesting fact is that
we can estimate the $L^2$ norm of $A^\theta_{\d,j}(f)$ by means of $(d-1)-$dimensional Kakeya maximal functions. Thus,
 we reduce the problem of dimension $d$ to the problem of dimension $(d-1)$. In this sense, our argument is very similar to Bourgain's induction
 on dimension argument in \cite{ref Bourgain1}.
To be more specific, we prove in this section
\begin{proposition}\label{induct}
Let $A^\theta_{\d,j}(f)(\xi)$ be as above, we have for all $j$
\begin{equation}\label{L2-reduce}
\|A^\theta_{\d,j}(f)\|_{L^2(S^{d-1})}\leq 2^{10}C_{\d,d-1}\d^{-\frac{d-3}{2}}\|f\|_{L^2(\R^d)},
\end{equation}
where $C_{\d,d-1}$ is  as  in \eqref{C}.
\end{proposition}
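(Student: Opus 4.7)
The plan is to slice perpendicular to $\gamma_j$ and convert the weighted $d$-dimensional tube integral defining $A^\theta_{\delta,j}(f)(\xi)$ into a $(d{-}1)$-dimensional integral over the projected tube, so that the Kakeya maximal function on $\R^{d-1}$ appears naturally. After rotating so that $\gamma_j$ is the $y_1$-axis, I would write $\xi=(\cos\theta',\sin\theta'\eta)$ with $\theta'\in[\theta/2,\theta]$ and $\eta\in S^{d-2}$, and split $y=(y_1,y')\in \R\times\R^{d-1}$. In these coordinates the tube $T^\delta_\xi$ has cross-section at height $y_1$ a $(d{-}1)$-dim disk $D_\xi(y_1)$ of radius $\sim\delta$ centred at $c_\xi(y_1)=q'+(y_1-q_1)\tan\theta'\cdot\eta$, and its projection $\widetilde T^\delta_\xi\subset\R^{d-1}$ is a $\delta$-tube of length $\sim\theta$ in direction $\eta$. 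A short calculation gives $\omega^j_\xi(y)^2=(y_1-q_1)^2\sec^2\theta'+O(\delta^2)$ so that on the tube $\omega^j_\xi(y)\approx\sqrt{|y_1-q_1|}$; on the support of $f$ the cylindrical constraint $|y'|\in[\sigma/2,\sigma]$ additionally forces $|y_1-q_1|\sim\sigma/\theta$ (which in particular requires $\theta\gtrsim\sigma$).

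The heart of the argument is the following Fubini decomposition combined with Cauchy--Schwarz on the inner $y_1$-integral:
\[
\int_{T^\delta_\xi}|f|\,\omega^j_\xi\,dy \;=\; \int_{\widetilde T^\delta_\xi}\!dy'\int_{I_\xi(y')}|f(y_1,y')|\,\omega^j_\xi(y_1,y')\,dy_1,
\]
where $I_\xi(y')$ is the $y_1$-fibre of the tube, of length $\lesssim\delta/\theta$. The Cauchy--Schwarz step
\[
\int_{I_\xi(y')}|f|\,\omega^j_\xi\,dy_1 \;\lesssim\;\Bigl(\int_{I_\xi(y')}|y_1-q_1|\,dy_1\Bigr)^{1/2}F(y') \;\lesssim\; \frac{\sqrt{\sigma\delta}}{\theta}\,F(y'),
\]
with $F(y'):=\|f(\cdot,y')\|_{L^2_{y_1}}$, uses the weight $\omega^j_\xi$ rather than its supremum and is what produces the sharp dependence on $\sigma,\delta,\theta$. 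Since $\|F\|_{L^2(\R^{d-1})}=\|f\|_{L^2(\R^d)}$, we can then apply the $(d{-}1)$-dimensional Kakeya bound $\int_{\widetilde T^\delta_\xi}F\,dy'\le \delta^{d-2}F^*_\delta(\eta)$, arriving at the pointwise estimate
\[
A^\theta_{\delta,j}(f)(\xi)\;\lesssim\; \frac{1}{|T^\delta_\xi|}\cdot\frac{\sqrt{\sigma\delta}}{\theta}\cdot\delta^{d-2}F^*_\delta(\eta)\;=\;\frac{\sqrt{\sigma/\delta}}{\theta}\,F^*_\delta(\eta).
\]

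Squaring, integrating over $\xi$ with $d\xi\approx \theta^{d-2}\,d\theta'\,d\eta$ on the cap, and invoking the definition \eqref{C} of $C_{\delta,d-1}$ to control $\|F^*_\delta\|_{L^2(S^{d-2})}\le C_{\delta,d-1}\|F\|_{L^2}$, I would obtain
\[
\|A^\theta_{\delta,j}(f)\|^2_{L^2(S^{d-1})}\;\lesssim\;\sigma\theta^{d-3}\delta^{-1}\,C_{\delta,d-1}^{2}\,\|f\|^2_{L^2(\R^d)},
\]
and then use $\sigma\le\theta\le 1$ to collapse this to $\delta^{-(d-3)}C_{\delta,d-1}^{2}\|f\|^2_{L^2}$, giving the advertised constant up to the numerical factor $2^{10}$ absorbed in $\lesssim$.

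The main obstacle is the simultaneous bookkeeping of three scales $\delta,\theta,\sigma$ together with the non-constant weight $\omega^j_\xi$. The Cauchy--Schwarz split has to be performed with $\omega^j_\xi$ kept inside the $y_1$-integral; replacing it by its sup $\sqrt{\sigma/\theta}$ and pulling it out \emph{before} Cauchy--Schwarz, or dropping the weight altogether, loses a factor of at least $\sqrt{\sigma/\delta}$ that is not recovered by the $(d{-}1)$-dimensional Kakeya bound. A secondary technical point is that the hinge point $q(\xi)=\gamma_j\wedge\gamma_\xi$ varies with $\xi$; this affects the $\xi$-integration only by a smooth perturbation since all $q(\xi)$ lie on $\gamma_j$, but it needs to be tracked to verify that the change-of-variables factor $\theta^{d-2}$ in $d\xi$ on the cap appears with the correct power.
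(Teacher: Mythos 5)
Your reduction is genuinely different from the paper's (you project along $\gamma_j$ and apply Cauchy--Schwarz on the $y_1$-fibres before invoking the $(d-1)$-dimensional Kakeya bound, whereas the paper decomposes the cap into $\sim(\theta/\delta)^{d-2}$ thin slabs $\mathcal{V}_k$ around hyperplanes containing $\gamma_j$ and slices each slab nearly parallel to the tubes), but as written it does not prove the proposition: the last step is a non sequitur exactly in the case $d=3$ that the paper needs. Your own intermediate estimate is $\|A^\theta_{\delta,j}f\|^2_{L^2(S^{d-1})}\lesssim \sigma\,\theta^{d-3}\,\delta^{-1}\,C^2_{\delta,d-1}\|f\|^2_2$, and you then claim that $\sigma\le\theta\le1$ collapses $\sigma\theta^{d-3}\delta^{-1}$ to $\delta^{-(d-3)}$. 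For $d\ge4$ this is true (indeed stronger than needed), but for $d=3$ it amounts to asserting $\sigma/\delta\lesssim 1$, which is false: $\sigma\in[\delta,1]$ is the dyadic distance scale produced by the pigeonholing of Section 2 and can be of order $1$. So in three dimensions your argument yields only $\|A^\theta_{\delta,j}f\|_{L^2(S^2)}\lesssim(\sigma/\delta)^{1/2}\log\tfrac1\delta\,\|f\|_2$, a loss of $(\sigma/\delta)^{1/2}$ relative to \eqref{3d_l2}; fed into Section 5 this degrades \eqref{simple4.7} from $|E\cap T^\sigma_j|\gtrsim\lambda^3\sigma\delta^{1+\epsilon}N$ to roughly $\lambda^3\delta^{2+\epsilon}N$, the verification of \eqref{4.7} fails, and the proof of Wolff's exponent no longer closes.

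The loss is structural rather than a matter of constants. Once you Cauchy--Schwarz out the $y_1$-variable, the hollow-cylinder support of $f$ (distance $\sim\sigma$ from $\gamma_j$) enters only through the size of the weight $\omega^j_\xi\sim(\sigma/\theta)^{1/2}$; the subsequent tube-by-tube application of the $(d-1)$-dimensional Kakeya bound to $F$ sees no trace of $\sigma$. The paper gains the missing factor precisely there: the slabs $\mathcal{V}_k$ overlap on ${\rm supp}\,f$ at most $C\theta^{d-2}/(\delta^{d-3}\sigma)$ times (estimate \eqref{y_ortho}), an overlap that improves as $\sigma$ grows, and in $d=3$ the overlap $\theta/\sigma$ exactly cancels the weight factor $\sigma/\theta$; the per-slab bound \eqref{3.2} is then proved by slicing perpendicular to $v_k$ (nearly parallel to the tubes), so each slice is a genuine unit-length $(d-1)$-dimensional Kakeya tube, and the temporary $\delta^{-1/2}$ from the $y_d$-integration is recouped when $\xi_d$ is integrated over an interval of length $\sim\delta$. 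To salvage your projection route you would need an analogous $\sigma$-dependent almost-orthogonality in the projected picture (all projected tubes pass within $O(\delta)$ of the origin while $F$ lives on the annulus $|y'|\sim\sigma$), rather than the unrestricted maximal bound $\int_{\widetilde T^\delta_\xi}F\le\delta^{d-2}F^*_\delta(\eta)$. A secondary inaccuracy: keeping the weight inside the Cauchy--Schwarz saves nothing, since on $T^\delta_\xi\cap{\rm supp}\,f$ one has $\omega^j_\xi\sim(\sigma/\theta)^{1/2}$ with comparable upper and lower bounds, so pulling it out costs only a constant; the genuine issue is the one above.
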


\begin{proof}
Without loss of generality, we let $j=0, \xi^0=e_1$ and suppress the subscript $j$ and superscript $\theta$ in $A^\theta_{\d,j}$.
By symmetry, we only consider the following integral
\begin{equation}\label{3.1}
\int_{S^{d-1}_+}|A_\d(f)|^2(\xi)d\Sigma(\xi),
\end{equation}
where $d\Sigma$ represents the standard surface measure on the unit sphere and $$S^{d-1}_+=\{\xi\in S^{d-1}\mid \xi_1\geq 0\}.$$

\begin{figure}[ht]
\begin{center}
$$\ecriture{\includegraphics[width=6cm]{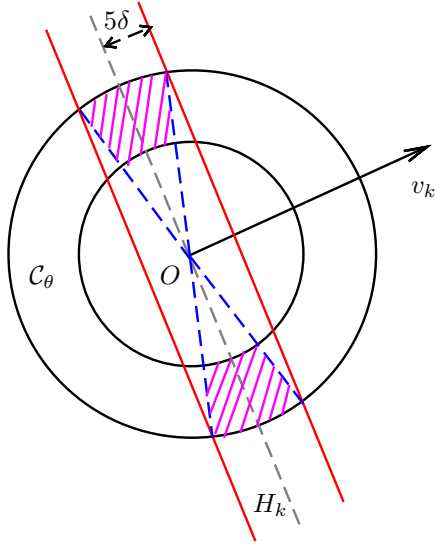}}
{\aat{13}{58}{$5\d$}\aat{46}{40}{$v_k$}\aat{19}{30}{$O$}\aat{5}{30}{$\mathcal{C}_\theta$}\aat{29}{5}{$H_k$}}$$
\end{center}
\caption{The angular decomposition for $\mathcal C_\theta$.}
\end{figure}

Since $\angle(\xi,e_1)\in[\frac{\theta}2,\theta]$,
we may restrict $\sin\frac{\theta}2\leq|\xi'|\leq\sin\theta$ in the integration of \eqref{3.1}  with respect to $\xi=(\xi_1,\xi')$.
Let
$$\mathcal{C}_\theta=\Bigl\{\xi'=(\xi_2,\ldots,\xi_d)\in\R^{d-1}:\sin\frac{\theta}2\leq|\xi'|\leq\sin\theta\Bigr\},$$
and take
a maximal $\frac{\d}{\theta}-$separated subset $\{v_k\}^{\sim(\theta/\d)^{d-2}}_{k=1}$ of $S^{d-2}$, which is the unit
sphere in $\R^{d-1}_{\xi'}$. Define
$$
\Pi^{\d,\theta}_k=\Big\{ \xi'\in\mathcal{C}_\theta:\Bigl|\Bigl\langle \frac{\xi'}{|\xi'|},v_k\Bigr\rangle\Bigr|\leq\frac{\d}2\Big\},
$$
which is  contained in a $5\d-$neighborhood of the
$(d-2)-$dimensional hyperplane $H_k$ perpendicular to $v_k$. Next,
we define $\Gamma^{\d,\theta}_1=\Pi^{\d,\theta}_1,$ and
$\Gamma^{\d,\theta}_k=\Pi^{\d,\theta}_k\setminus\Big(\bigcup^{k-1}_{j=1}
\Pi^{\d,\theta}_j\big)$ for $k\geq 2$. Then we have
$\mathcal{C}_\theta\subset\bigcup_k\Gamma^{\d,\theta}_k$ and
$\Gamma^{\d,\theta}_k\cap\Gamma^{\d,\theta}_{k'}=\emptyset$ for
$k\neq k'$.

If $\xi'\in\Gamma^{\d,\theta}_k$ for some $k\in\Bigl\{1,\ldots,\sim \Big(\frac{\theta}{\d}\Big)^{d-2}\Bigr\}$, then the tube $T^\d_\xi$, in
 direction of $\xi=(\sqrt{1-|\xi'|^2},\xi')\in S^{d-1}$ must lie in a $50\d-$neighborhood $\tilde{H}^{50\d}_k$
 of the hyperplane $\tilde{H}_k:=\text{span}\{e_1,H_k\}$, since $T^\d_{\xi^0}\cap T^\d_\xi\neq \emptyset$.

\begin{figure}[ht]
\begin{center}
$$\ecriture{\includegraphics[width=5cm]{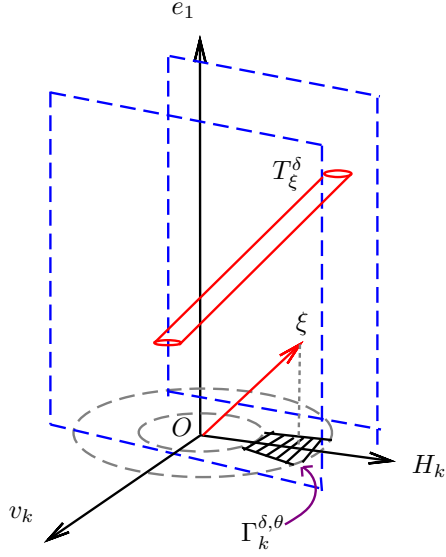}}
{\aat{19}{72}{$e_1$}\aat{32}{50}{$T^\d_\xi$}\aat{35}{30}{$\xi$}\aat{28}{2}{$\Gamma^{\d,\theta}_k$}
\aat{19}{16}{$O$}\aat{-2}{5}{$v_k$}\aat{50}{11}{$H_k$}}$$
\end{center}
\caption{$T^\d_\xi$ is contained in $\tilde{H}^{50\d}_k$.}
\end{figure}
From this observation, we introduce the following cylindrical sets
$$
\mathcal{V}_k=\{y\in\R^d: |y_1|\leq 1,\, |\langle y',v_k\rangle|<50\d\}.
$$
Then we have the following almost orthogonality estimate
\begin{equation}\label{y_ortho}
\sum_k\chi_{\mathcal{V}_k\cap\text{supp}f}(y)\leq C\frac{\theta^{d-2}}{\d^{d-3}\sg}.
\end{equation}
To see this, for any $y'$ such that $\frac{\sg}2\leq|y'|\leq\sg$ and
denote by $H^{50\d}_k$ the $50\d-$neighborhood of $H_k$. Let
$\Pi_{y'}$ be the hyperplane in $\R^{d-1}$ perpendicular to $y'$.
One easily verifies that $H^{50\d}_k$ contains $y'$ only when
$v_k\in S^{d-2}$ lives in a $\frac{100\d}{\sg}-$neighborhood of
$\Pi_{y'}$. Thus there are at most
$O\bigl(\frac{\theta^{d-2}}{\sg\d^{d-3}}\bigr)$ many $H^{50\d}_k$'s
containing $y'$ simultaneously.

Now we turn to estimate \eqref{3.1}. This will be reduced to the following maximal function $\mathcal{A}_\d$ defined similar to $A_\d$,
$$
\mathcal{A}_\d(f)(\xi)\mathrel{\mathop=^{\rm def}}
\sup_{\substack{T^\d_0\cap T^\d_\xi\neq\emptyset\\\angle(T^\d_0,T^\d_\xi)\in[\frac{\theta}2,\theta]}}
\frac{1}{|T^\d_\xi|}\int_{T^\d_\xi}|f(y)|dy.
$$
For the moment, we assume that for each $k\in \Big\{1,\ldots,\sim\Big(\frac{\theta}{\d}\Big)^{d-2}\Big\}$
\begin{equation}\label{3.2}
\|\mathcal{A}_\d(f\chi_{\mathcal{V}_k})\|_{L^2(\{\xi\in
S^{d-1}_+\mid\,\xi'\in\Gamma^{\d,\theta}_k\})}\leq C_{\d,d-1}
\|f\chi_{\mathcal{V}_k}\|_{L^2}.
\end{equation}
We next deduce \eqref{L2-reduce} under the assumption \eqref{3.2}.
Noting that for $\theta\leq 1$,
$$\frac{1}{\sqrt{1-\sin^2\theta}}\leq 2,$$
and
$$\w_\xi(y)\sim \Big(\frac{\sg}{\theta}\Big)^{\frac{1}2},\q\forall y\in\mathcal{V}_k\cap T^\d_\xi\cap\text{supp} f,\;\forall\xi'\in
\Gamma^{\d,\theta}_k,$$
we estimate \eqref{3.1} in the following manner
\begin{align*}
\eqref{3.1}&\leq4\int_{\mathcal{C}_\theta}|A_\d(f)|^2(\sqrt{1-|\xi'|^2},\xi')d\xi'
\leq4\sum_k\int_{\Gamma^{\d,\theta}_k}|A_\d(f\chi_{\mathcal{V}_k})|^2(\xi')d\xi'\\
&\lesssim \frac{\sg}{\theta}\sum_k\int_{\Gamma^{\d,\theta}_k}|\mathcal{A}_\d(f\chi_{\mathcal{V}_k})|^2(\xi')d\xi'
\lesssim \frac{\sg}{\theta}C^2_{\d,d-1}\sum_k\int_{\R^{d}}|f|^2\chi_{\mathcal{V}_k}(y)dy
\lesssim C^2_{\d,d-1}\Big(\frac{\theta}{\d}\Big)^{d-3}\|f\|^2_2,
\end{align*}
where the last inequality is due to \eqref{y_ortho}.
\begin{figure}[ht]
\begin{center}
$$\ecriture{\includegraphics[width=5cm]{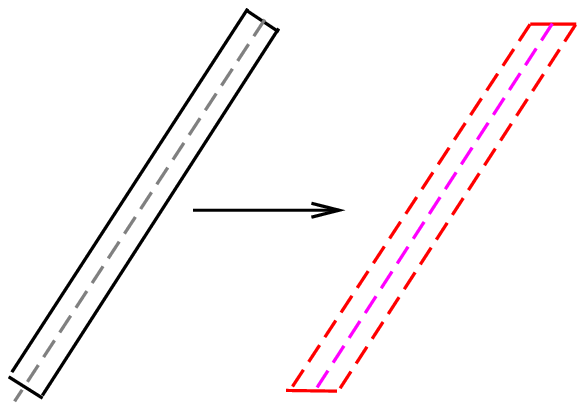}}
{\aat{-7}{5}{$T^\d_\xi$}\aat{36}{5}{$\mathcal{T}^\d_\xi$}}$$
\end{center}
\caption{$T^\d_\xi$;\, and\,  $\mathcal{T}^\d_\xi$.}
\end{figure}

Therefore, we are reduced to proving \eqref{3.2}.
By rotation invariance, we may assume $k=1$ and $v_1$ is identical to $e_d$.
We may assume further that $f$ is  supported in $\mathcal{V}_1$.
Clearly, $\Gamma^{\theta,\d}_1$ is contained in the region
$$\Theta^{\theta,\d}_1:=\Big\{\xi'\in\R^{d-1}:|(\xi_2,\ldots,\xi_{d-1})|\leq\sin\theta,\,|\xi_d|\leq 10\d\Big\}.$$
Fix $\xi'\in\Theta^{\theta,\d}_1$ and denote by $p\in\gamma_\xi$ such that $p$ is closest to $\gamma_\xi\wedge\gamma_0$ with $ p=(p_1,p')$.
We slightly modify $T^\d_\xi(a)$ to be $\mathcal{T}^\d_\xi(a)$ as follows, singling out $y_1$ as the parameter of the central axis (see Figure 4)
\begin{align*}
\T^\d_\xi=\Big\{(y_1,y')\in \R\times
\R^{d-1}:&\,\Big|y'-p'-\frac{y_1-p_1}{\sqrt{1-|\xi'|^2}}\xi'\Big|\leq \frac{\d}{2\sqrt{1-|\xi'|^2}}
,\\
&p_1-(\frac{1}{2}-\text{dist}(a,p))\cos\a\leq y_1\leq p_1+(\frac{1}{2}+\text{dist}(a,p))\cos\a\Big\}.
\end{align*}
where $a=(a_1,a')$ is the middle of $\gamma_\xi$ and $\a:=\angle(\gamma_0,\gamma_\xi)$.

Let $\mathcal{P}(y_d)$ be the hyperplane perpendicular to $v_1$ and parameterized by $y_d$. Fix $y_d\in[-50\d,50\d]$ and consider
$\mathcal{P}(y_d)\bigcap\T^\d_\xi:=\mathcal{E}_{\d}(y_d)$. One can verify that $\mathcal{E}_\d$ is an ellipse with major axis at least $1/10$.
In fact, let $\beta$ be the angle between  $\T^\d_\xi$ and $v_1$.
\begin{figure}[ht]
\begin{center}
$$\ecriture{\includegraphics[width=5cm]{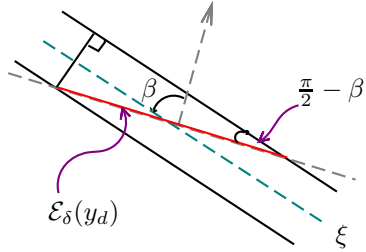}}
{\aat{20}{21}{$\beta$}\aat{40}{21}{$\frac{\pi}{2}-\beta$}\aat{8}{5}{$\mathcal{E}_\d(y_d)$}\aat{45}{2}{$\xi$}}$$
\end{center}
\caption{The ellipsoid $\mathcal{E}_\d(y_d)$.}
\end{figure}
We have $\cos\beta=\xi_d$, which implies the major axis is at least
$\frac{\d}{\sin(\frac{\pi}{2}-\beta)}\geq\frac{\d}{|\xi_d|}\geq\frac{1}{10}$.
Thus $\mathcal{E}_\d(y_d)$ can be regarded as a $(d-1)-$dimensional
Kakeya tube with dimensions
$1\times\underbrace{\d\times\ldots\times\d}_{d-2}$.

 Let $r=(1-\xi^2_d)^{\frac{1}{2}}$ and $\xi^{''}=(\xi_2,\ldots,\xi_{d-1})$. Since $|\xi^{''}|\leq \sin \theta\leq\frac{\sqrt{3}}{2}$ and $|r-1|\ll 1$
 by taking $\d$ sufficiently small, we see that $(\sqrt{r^2-|\xi^{''}|^2},\xi^{''})$ represents a vector
on $r S^{d-2}$. By Fubini's theorem, the integral average of $f$ over $\T^\d_\xi$ is controlled by
$$
\d^{-(d-1)}\int_{|y_d|\leq 50\d}dy_d\int_{\mathcal{E}_\d(y_d)}|f(y_1,\ldots,y_{d-1},y_d)|dy_1\ldots dy_{d-1}.
$$
Next, we use the $(d-1)-$dimensional Kakeya maximal functions to  bound the above formula. In particular, this implies
$$
\mathcal A_\d(f)(\xi')\lesssim\d^{-1}\int_{|y_d|\leq 50\d}M_\d(f(\ldots,y_d))(\sqrt{r^2-|\xi^{''}|^2},\xi^{''})dy_d,
$$
where $M_\d(f(\ldots,y_d))$ denotes the $(d-1)-$dimensional Kakeya
maximal operator acting on $f$, and  $f$ is regarded as a function
of the $d-1$ variables $(y_1,\ldots,y_{d-1})$ with $y_d$ frozen as a
parameter.

Using Minkowski's inequality and H\"{o}lder's inequalities, we obtain by  $r<1$
\begin{align*}
&\lt(\int_{|(\xi_2,\ldots,\xi_{d-1})|\leq\sin\theta}|\mathcal{A}_\d(f)(\xi')|^2d\xi_2\ldots d\xi_{d-1}\rt)^{\frac{1}2}\\
&\leq\d^{-1}\int_{|y_d|\leq 50\d}\lt(\int|M_\d(f(\ldots,y_d))|^2(\sqrt{r^2-|\xi^{''}|^2},\xi^{''})d\xi_2\ldots d\xi_{d-1}\rt)^{\frac{1}2}dy_d\\
&\leq2\d^{-1}\int_{|y_d|\leq 50\d}\lt(\|M_\d(f(\ldots,y_d))\|^2_{L^2(S^{d-2})}\rt)^{\frac{1}2}dy_d\\
&\leq2\d^{-1}C_{\d,d-1}\int_{|y_d|\leq 50\d}\|f(\ldots,y_d)\|_{L^2_{y_1,\ldots,y_{d-1}}}dy_d\\
&\leq2^6\d^{-1/2}C_{\d,d-1}\|f\|_2.
\end{align*}
Squaring both sides and integrating with respect to $\xi_d\in[-10\d,10\d]$, we get \eqref{3.2} and hence \eqref{L2-reduce}.
\end{proof}

It is well-knownthat $C_{\d,2}=\log\frac{1}{\d}$ \footnote{See formula (1.5) in \cite{ref Bourgain1} for example.} , and consequently we conclude
\begin{corollaire}\label{coral_L2}
If $d=3$, we have for some $c>0$
\begin{equation}\label{3d_l2}
\|A^\theta_{\d,j}(f)\|_{L^2(S^2)}\leq c \Big(\log\frac{1}{\d}\Big)\|f\|_{L^2(\R^3)}.
\end{equation}
\end{corollaire}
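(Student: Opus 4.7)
The plan is to derive the corollary as an immediate specialization of Proposition~\ref{induct} to dimension $d=3$, combined with the known sharp $L^2$ bound for the Kakeya maximal function on $S^{1}$.

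More precisely, the first step is to substitute $d=3$ into the estimate
$$\|A^\theta_{\d,j}(f)\|_{L^2(S^{d-1})}\leq 2^{10}C_{\d,d-1}\d^{-\frac{d-3}{2}}\|f\|_{L^2(\R^d)}$$
provided by Proposition~\ref{induct}. With $d=3$, the exponent $-\tfrac{d-3}{2}$ of $\d$ vanishes, so the $\d$-weight disappears entirely and the inequality reduces to
$$\|A^\theta_{\d,j}(f)\|_{L^2(S^{2})}\leq 2^{10}\,C_{\d,2}\,\|f\|_{L^2(\R^3)}.$$
Thus the corollary will follow as soon as we insert an acceptable upper bound for the constant $C_{\d,2}$ defined in \eqref{C}.

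The second step is to invoke the two-dimensional endpoint bound: the $L^2(S^1)$ Kakeya maximal inequality of Córdoba and of Bourgain (the paper cites formula (1.5) in \cite{ref Bourgain1}) gives
$$C_{\d,2}=\sup_{\|g\|_{L^2(\R^2)}\neq 0}\frac{\|g^*_\d\|_{L^2(S^1)}}{\|g\|_{L^2(\R^2)}}\lesssim \log\frac{1}{\d}.$$
Plugging this into the previous display yields
$$\|A^\theta_{\d,j}(f)\|_{L^2(S^{2})}\leq c\,\Big(\log\tfrac{1}{\d}\Big)\|f\|_{L^2(\R^3)}$$
with $c = 2^{10}\cdot(\text{absolute constant from the 2D Kakeya bound})$, which is precisely \eqref{3d_l2}.

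There is essentially no obstacle here: the work has already been done in Proposition~\ref{induct}, whose purpose is to reduce the three-dimensional $L^2$ estimate for the auxiliary maximal function to the $(d-1)$-dimensional Kakeya maximal bound. The only external input needed is the Córdoba–Bourgain $L^2$ estimate in the plane, which is available unconditionally. The slight subtlety worth noting is that the $\d^{-(d-3)/2}$ loss, which in higher dimensions would obstruct a clean conclusion from this kind of dimension-reduction argument, is exactly absent in the case $d=3$; this is what makes the three-dimensional setting amenable to the approach and is the structural reason why the corollary takes such a clean form.
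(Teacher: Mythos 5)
Your proposal is correct and is essentially the paper's own argument: the paper derives the corollary by setting $d=3$ in Proposition \ref{induct}, noting that the factor $\d^{-\frac{d-3}{2}}$ disappears, and using the known two-dimensional bound $C_{\d,2}\lesssim \log\frac{1}{\d}$ from \cite{ref Bourgain1}. Nothing further is needed.
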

This corollary is crucial in the proof of Theorem \ref{thm 1}.
\begin{remarque}
We observe some essential distinctions between the 3D and higher
dimensional problems. Indeed, we find in Proposition \ref{induct}
that the loss of the factor $\d^{-\frac{d-3}{2}}$ vanishes in the
three dimensional case. This allows us to use the optimal estimates
on 2D Kakeya maximal function to deduce Wolff's $L^{\frac52}-$bound
on the 3D case. On the other hand, we do not know whether the
$\d^{-\frac{d-3}{2}}$ loss is necessary in \eqref{L2-reduce}. Since
our method of reducing the estimate on $d$-dimensional auxiliary
maximal function to the estimates of $(d-1)-$dimensional Kakeya
maximal function is rather crude, it seems possible by strengthening
the argument to reduce the $\frac{d-3}{2}-$exponent of the loss.
This might be easier when $d$ is large, while for lower dimensions,
it seems rather difficult.
\end{remarque}

\section{The key Lemmas}
\begin{lemme}\label{lemma 4.1_i}
Let $N$ satisfy scenario I, then
$|E|\geq\ld M\d^{d-1}(16N)^{-1}$.
\end{lemme}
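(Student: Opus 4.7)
The plan is a direct double-counting argument. Scenario I furnishes an index set $J \subseteq \{1,\ldots,M\}$ with $|J| \geq M/2$ and, for each $j \in J$, a ``good'' subset
\[
E_j := T^\delta_j \cap E \cap \Bigl\{x \in \R^d : \sum_{l \neq j} \chi_{T^\delta_l}(x) \leq N\Bigr\}
\]
with $|E_j| \geq \tfrac{\lambda}{4}|T^\delta_j|$. Summing over $J$ yields the lower bound
\[
\sum_{j \in J} |E_j| \geq \frac{M}{2} \cdot \frac{\lambda}{4}|T^\delta_j| \gtrsim M\lambda \delta^{d-1},
\]
since $|T^\delta_j|$ is a fixed constant multiple of $\delta^{d-1}$ from the definition of a $\delta$-tube.

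For the matching upper bound I would rewrite the sum as an integral and exploit the multiplicity control built into the sets $E_j$:
\[
\sum_{j \in J} |E_j| = \int_{\R^d} \sum_{j \in J} \chi_{E_j}(x)\, dx.
\]
The integrand is supported in $E$ since $E_j \subseteq E$ for every $j$. Whenever the integrand is nonzero at some $x$, there is at least one $j_0 \in J$ with $x \in E_{j_0}$, which forces $\sum_{l \neq j_0} \chi_{T^\delta_l}(x) \leq N$, and therefore $\sum_{l} \chi_{T^\delta_l}(x) \leq N+1$. Consequently $\sum_{j \in J} \chi_{E_j}(x) \leq N+1 \leq 2N$ (for $N\geq 1$), giving $\sum_{j \in J} |E_j| \leq 2N|E|$.

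Combining the two bounds produces $|E| \gtrsim M\lambda \delta^{d-1}/N$, and tracking constants with the precise normalization of $|T^\delta_j|$ yields the stated form $|E| \geq \lambda M \delta^{d-1}/(16N)$. There is no real obstacle here: the content is purely combinatorial, with the single key observation being that the very definition of $E_j$ automatically caps the pointwise multiplicity $\sum_{j \in J}\chi_{E_j}$ by $N+1$, so one never needs more than elementary set inclusions.
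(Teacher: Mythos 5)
Your argument is correct and is essentially the paper's own proof: the same double counting of $\sum_j |E_j|$, with the lower bound $\tfrac{\lambda}{4}|T^\delta_j|$ per tube over at least $M/2$ tubes, and the upper bound coming from the pointwise multiplicity cap $\sum_l \chi_{T^\delta_l}(x)\leq N+1\leq 2N$ built into the definition of the sets $E_j$. The constant $16$ falls out exactly as you indicate, so nothing further is needed.
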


\begin{proof}
Relabeling the subscripts, we may write the tubes involved in case I as $\{T^\d_j\}^K_{j=1}$ with $M\geq K\geq M/2$.
Then, we have
\begin{align*}
\frac{\ld M\d^{d-1}}{8N}
&\leq\frac{\ld}{4N}\sum^K_{j=1}|T^\d_j|
\leq\frac{1}N\sum^K_{j=1}\Bigl|T^\d_j\cap E\cap\Bigl\{x\in\R^d\mid\sum^M_{\ell=1,\ell\neq j}\chi_{T^\d_\ell}(x)\leq N\Bigr\}\Bigr|\\
&\leq\int_{ E\cap\bigl\{x\in\R^d:\,\sum\limits_{\ell=1,\ldots,M }\chi_{T^\d_\ell}(x)\leq N+1\bigr\}}
\frac{1}{N}\sum^K_{j=1}\chi_{T^\d_j}(x)dx
\leq 2|E|.
\end{align*}
\end{proof}

\begin{lemme}\label{lemma 4.2}
Suppose there are $M$ many tubes $\{T^\sg_j\}^M_{j=1}$ such that
$j\neq j'$ and $T^\sg_j\cap T^\sg_{j'}\neq \emptyset$ implies $\angle (T^\sigma_j,T^\sg_{j'})\geq \gamma$ for some $0<\gamma<\frac\pi2$.
Assume also that for some $\rho>0$ and any $a\in \R^d$, there are $M_0$ many of such tubes satisfying
\begin{equation}\label{4.1}
\rho|T^\sg_j|\leq \Big|T^\sg_j\cap E\cap B(a,\sg/\gamma)^c\Big|.
\end{equation}
Then we have
\begin{equation}\label{lemma 4.2 eq}
|E|\geq\rho \sg^{d-1}M^{1/2}_0/2.
\end{equation}
\end{lemme}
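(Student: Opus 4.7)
The proof is a dichotomy between a ``low multiplicity'' case and a ``high multiplicity'' (bush) case, exploiting the freedom provided by the uniform-in-$a$ form of the hypothesis. Fix any $a\in\R^d$ and let $\mathcal J:=\mathcal J(a)\subset\{1,\dots,M\}$, $|\mathcal J|\geq M_0$, be the collection of tubes satisfying \eqref{4.1} at this $a$. Set
\[
K:=\max_{x\in\R^d}\mathrm{Card}\{j\in\mathcal J:x\in T^\sg_j\},
\]
and split into two regimes according to whether $K$ is below or above the threshold $M_0^{1/2}$.

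\emph{Low multiplicity, $K\leq M_0^{1/2}$.} In this case the characteristic functions $\chi_{T^\sg_j}$ overlap at most $M_0^{1/2}$ times at every point, so
\[
\rho\sg^{d-1}M_0\lesssim\sum_{j\in\mathcal J}|T^\sg_j\cap E\cap B(a,\sg/\gamma)^c|\leq\int_E\sum_{j\in\mathcal J}\chi_{T^\sg_j}(x)\,dx\leq K|E|\leq M_0^{1/2}|E|,
\]
which already gives $|E|\gtrsim\rho\sg^{d-1}M_0^{1/2}$.

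\emph{High multiplicity, $K>M_0^{1/2}$.} Pick a point $x_0$ achieving the maximum, so that at least $M_0^{1/2}$ tubes of $\mathcal J$ pass through $x_0$. Two such tubes meet at $x_0$ with angular separation $\geq\gamma$, and so at distance $\sg/\gamma$ from $x_0$ they have separated apart by $\geq\sg$; in particular these bush tubes are pairwise disjoint outside $B(x_0,\sg/\gamma)$. The hypothesis provides $E$-mass outside $B(a,\sg/\gamma)$ rather than outside $B(x_0,\sg/\gamma)$, but the trivial estimate $|T^\sg_j\cap B(x_0,\sg/\gamma)|\lesssim\sg^d/\gamma\ll\rho|T^\sg_j|$ (in the natural regime $\sg\ll\rho\gamma$) transfers the mass at the cost of a factor of two, yielding $|T^\sg_j\cap E\cap B(x_0,\sg/\gamma)^c|\geq\rho|T^\sg_j|/2$ for each bush tube. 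Summing these \emph{disjoint} contributions over the $\geq M_0^{1/2}$ tubes through $x_0$ gives $|E|\geq\rho\sg^{d-1}M_0^{1/2}/2$.

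The main obstacle is the mass-transfer step in the bush case: \eqref{4.1} controls the $E$-mass outside $B(a,\sg/\gamma)$, whereas the bush disjointness is naturally stated outside $B(x_0,\sg/\gamma)$. A cleaner alternative -- available precisely because the hypothesis holds uniformly in $a$ -- is to re-apply it at $a=x_0$ and rerun the dichotomy for $\mathcal J(x_0)$, circumventing the transfer entirely. In either formulation the decisive geometric fact is that two $\gamma$-separated lines through a common point are $\sg$-separated after distance $\sg/\gamma$; this both dictates the excision radius $\sg/\gamma$ in \eqref{4.1} and underpins the bush disjointness at the threshold $K\sim M_0^{1/2}$ where the two regimes balance.
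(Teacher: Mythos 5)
Your overall strategy is the paper's: a bush argument at a point of high multiplicity, using that tubes which meet at angle at least $\gamma$ are disjoint outside a ball of radius about $\sg/\gamma$ around the common point, combined with the mass hypothesis \eqref{4.1}. The dichotomy at the threshold $M_0^{1/2}$ is only a bookkeeping variant: the paper skips it by averaging $\sum_j\chi_{T^\sg_j}$ over $E$ (its integral is at least $\rho\sg^{d-1}M_0$ by \eqref{4.1}) to find $x_0\in E$ lying in at least $C_*:=[\rho\sg^{d-1}M_0/(2|E|)]$ of the tubes, and then the bush bound $|E|\ge C_*\rho\sg^{d-1}$ gives $|E|^2\ge\rho^2\sg^{2(d-1)}M_0/4$ directly. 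So your low-multiplicity half is fine and the architecture matches.

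The gap is in your bush case. Your primary route transfers the mass from $B(a,\sg/\gamma)^c$ to $B(x_0,\sg/\gamma)^c$ using $|T^\sg_j\cap B(x_0,\sg/\gamma)|\lesssim\sg^d/\gamma$, which requires $\sg/\gamma\lesssim\rho$; this is not among the hypotheses of the lemma, and it is not available in the application: there (see \eqref{4.7} and the choice of $\rho$ and $\gamma$ in the lemma that follows) one has, for $d=3$, $\rho\sim\ld^3\sg^{-1}\d^{1+\e}N$ and $\sg/\gamma\sim\d^\e\ld$, and nothing forces $N\gtrsim\sg\ld^{-2}\d^{-1}$. Your fallback, to re-apply the hypothesis at $a=x_0$ and ``rerun the dichotomy for $\mathcal{J}(x_0)$'', does not close the argument as stated: if the family $\mathcal{J}(a)$ is allowed to depend on $a$, then in the high-multiplicity case for $\mathcal{J}(x_0)$ the maximizing point is some new $x_1$, in general different from $x_0$, so the mismatch between the excised ball (centered at $x_0$) and the bush center (at $x_1$) recurs and the recursion never terminates. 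The reading of the hypothesis that the paper actually uses (and that holds in the application, since \eqref{4.7} is valid for every $a$ and every tube of the fixed subcollection) is that one and the same collection of $M_0$ tubes satisfies \eqref{4.1} simultaneously for all $a$. With that reading no transfer and no rerun is needed: the bush tubes through $x_0$ themselves satisfy \eqref{4.1} with $a=x_0$, hence each carries at least $\rho\sg^{d-1}$ of $E$-mass outside $B(x_0,\sg/\gamma)$, these contributions are disjoint, and your high-multiplicity case (and the lemma) follows at once --- which is exactly the paper's proof. If you state this uniform-in-$a$ reading explicitly and delete the transfer step, your argument is correct and essentially identical to the paper's.
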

\begin{proof}
By relabeling the indices, we have, under these assumptions, a sequence $\{T^\sg_j\}^{M_0}_{j=1}$ satisfying
$$
\rho \sg^{d-1}M_0\leq \int_{E}\sum^{M_0}_{j=1}\chi_{T^\sg_j}(x)dx.
$$
Thus, there exists an $x_0\in E$ such that
$$
\sum^{M_0}_{j=1}\chi_{T^\sg_j}(x_0)\geq\frac{\rho\sg^{d-1}M_0}{2|E|}.
$$
We relabel the subcollection of the tubes
$\{T^\sg_j\}_{j\in\{1,\ldots,C_*\}}$ containing $x_0$, where
$$C_*=C_{\rho,\sg,M_0,E}=\Big[\frac{\rho\sg^{d-1}M_0}{2|E|}\Big].$$

We notice the orthogonality outside the ball $B(x_0,\sg/\gamma)$  by
the following observation. It follows
 from the angle condition in the assumptions that the component of
$T^\sg_j\cap T^\sg_{j'}$ must be contained in the ball $B(x_0,L)$
with $L$ at most $\frac{\sg}{2}/\sin{\frac{\gamma}{2}}$, which is
less than $\sg/\gamma$ for $\gamma<\frac \pi2$. With the help of
this orthogonality,  the choice of $C_*$ and \eqref{4.1}, we have
\begin{align*}
|E|&\geq\Bigl|E\cap B(x_0,\sg/\gamma)^c\cap\bigcup^{C_*}_{j=1}T^\sg_j\Bigr|\geq\sum^{C_*}_{j=1}|E\cap B(x_0,\sg/\gamma)^c\cap T^\sg_j|\\
&\geq C_* \rho\sg^{d-1}\geq\frac{\rho^2\sg^{2(d-1)}M_0}{4|E|},
\end{align*}
where we use Lemma \ref{lemma 4.1_i} in the last inequality.

\end{proof}

\begin{figure}
\begin{center}
$$\ecriture{\includegraphics[width=7cm]{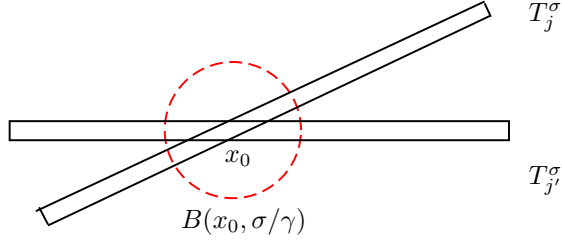}}
{\aat{50}{20}{$T^\sg_j$}\aat{22}{7}{$x_0$}\aat{18}{1}{$B(x_0,\sg/\gamma)$}\aat{50}{5}{$T^\sg_{j'}$}}$$
\end{center}
\caption{The orthogonality of tubes outside a ball $B(x_0,\sg/\gamma)$.}
\end{figure}

\begin{lemme}\label{lemma 4.3}
Let $N$ satisfy both case I and case II$_{\theta\sg}$. Then, there
are $M2^{-4}\Big(\log_2\frac{1}\d\Big)^{-2}$ many tubes $T^\d_j$  in
II$_{\theta\sg}$. Suppose for any $\e>0$, there exists $C_\e>0$ such
that for small $\d>0$ and any point $a\in\R^d$,
\begin{equation}\label{4.7}
|E\cap B(a,\d^\e\ld^{d-2})^c\cap T^\sg_j|\geq C_\e\ld^3\sg\d^{d-2+\e}N,
\end{equation}
then we have \eqref{g_dis_form}.
\end{lemme}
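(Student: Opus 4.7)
The first assertion of the lemma, that the collection of central $\delta$-tubes participating in scenario II$_{\theta\sigma}$ has cardinality at least $M\,2^{-4}(\log_2\tfrac{1}{\delta})^{-2}$, is immediate from the very definition of II$_{\theta\sigma}$ and requires no additional argument. The substance of the lemma is the implication $\eqref{4.7}\Rightarrow\eqref{g_dis_form}$, and the overall strategy I would adopt is to produce two independent lower bounds on $|E|$ and eliminate the auxiliary multiplicity $N$ between them.

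The first lower bound comes from scenario I via Lemma \ref{lemma 4.1_i}:
\[
|E|\;\geq\;\frac{\lambda M\delta^{d-1}}{16N}.
\]
For the second bound, I would work with the $\sigma$-fattenings $T^\sigma_j$ of the central $\delta$-tubes $T^\delta_j$ furnished by II$_{\theta\sigma}$, with the aim of feeding them to Lemma \ref{lemma 4.2}. That lemma needs two ingredients: a density $\rho$ such that $\rho|T^\sigma_j|$ is dominated by the mass of $E$ on $T^\sigma_j$ away from a ball, and an angle-separation $\gamma$ among intersecting tubes. The natural choice is $\rho:=C_\epsilon\lambda^3\delta^{d-2+\epsilon}N\sigma^{-(d-2)}$, so that $\rho|T^\sigma_j|$ matches the right-hand side of \eqref{4.7} up to a constant.

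The delicate point is reconciling the ball radius $\sigma/\gamma$ that appears in Lemma \ref{lemma 4.2} with the radius $\delta^\epsilon\lambda^{d-2}$ that appears in \eqref{4.7}. To force the former to lie inside the latter I would set $\gamma:=\sigma\delta^{-\epsilon}\lambda^{-(d-2)}$, which guarantees $B(a,\sigma/\gamma)\subset B(a,\delta^\epsilon\lambda^{d-2})$ and hence $|E\cap B(a,\sigma/\gamma)^c\cap T^\sigma_j|\geq\rho|T^\sigma_j|$ for every $a$. To meet the angle-separation hypothesis of Lemma \ref{lemma 4.2}, I would then extract from the central tubes a $\gamma$-separated subfamily $\mathcal{J}'$ by pigeonholing the relevant cap of $S^{d-1}$ into $\gamma$-cells: since the central tubes are $\delta$-separated in direction and any $\gamma$-cell contains at most $(\gamma/\delta)^{d-1}$ of them, the greedy extraction delivers
\[
|\mathcal{J}'|\;\gtrsim\;\frac{M}{2^4(\log_2\tfrac{1}{\delta})^2}\,(\delta/\gamma)^{d-1}.
\]
Lemma \ref{lemma 4.2} then outputs $|E|\gtrsim\rho\,\sigma^{d-1}\,|\mathcal{J}'|^{1/2}$.

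Multiplying this second bound by the Lemma \ref{lemma 4.1_i} bound cancels the factor $N$, and plugging in the definitions of $\rho$, $\gamma$, and $|\mathcal{J}'|$ produces a schematic estimate of the form
\[
|E|^2\;\gtrsim_\epsilon\;\lambda^{4+\frac{(d-1)(d-2)}{2}}\,\delta^{\frac{5d-7}{2}+O(\epsilon)}\,\sigma^{(3-d)/2}\,M^{3/2}.
\]
In the three-dimensional case the $\sigma$-factor vanishes and the exponents collapse to $|E|^2\gtrsim_\epsilon\lambda^5\delta^{4+O(\epsilon)}M^{3/2}$, which rearranges (absorbing the logarithmic factors into $\delta^{-\epsilon}$) to precisely \eqref{g_dis_form} with $p=5/2$, $q=10/3$. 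The main obstacle I anticipate is the coupled bookkeeping between $\gamma$, which controls the pigeonhole loss in passing from $\mathcal{J}$ to $\mathcal{J}'$, and the ball radius in \eqref{4.7}, which dictates the choice of $\gamma$; the crucial alignment that delivers Wolff's $L^{5/2}$-exponent is the fact that in dimension three these two losses cancel exactly, leaving no $\sigma$-dependence in the final inequality.
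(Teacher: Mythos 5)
Your proposal follows essentially the same route as the paper's own proof: the same two lower bounds on $|E|$ (Lemma \ref{lemma 4.1_i} for scenario I, and Lemma \ref{lemma 4.2} applied to a maximal $\gamma$-separated subfamily of the II$_{\theta\sigma}$-tubes with $\rho\sim\lambda^{3}\sigma^{2-d}\delta^{d-2+\varepsilon}N$ and $\gamma\sim\sigma\delta^{-\varepsilon}\lambda^{-(d-2)}$), multiplied together so that $N$ cancels, and for $d=3$ your bookkeeping rearranges to \eqref{g_dis_form} exactly as in the paper. Two small points where the paper does slightly more: (i) Lemma \ref{lemma 4.2} requires $0<\gamma<\pi/2$, so the regime $\sigma\gtrsim\delta^{\varepsilon}\lambda^{d-2}$ must be treated separately (there \eqref{4.7} applied to a single tube already yields the needed bound \eqref{4.6}, since $M\delta^{d-1}\lesssim 1$); (ii) you keep the exponent $M_0^{1/2}$, so your final schematic estimate only collapses to \eqref{g_dis_form} when $d=3$, while the lemma is stated for all $d\ge 3$ — the missing (one-line) step is to replace $M_0^{1/2}$ by $M_0^{1/(d-1)}$ (legitimate since $M_0\ge 1$), after which the $\sigma$-dependence cancels in every dimension and one obtains the paper's intermediate estimate \eqref{4.6}, hence \eqref{g_dis_form}.
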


\begin{proof}
We rewrite \eqref{g_dis_form} as $|E|^2\geq C_\e\ld^{d+2}\Big(\d^{d-1}M\Big)^{\frac{d}{d-1}}\d^{d-2+\e}$. Then it suffices to prove
\begin{equation}\label{4.5}
|E|\geq \ld M\d^{d-1}(16N)^{-1},
\end{equation}
and
\begin{equation}\label{4.6}
|E|\geq C_\e\ld^{d+1}N(\d^{d-1}M)^{\frac{1}{d-1}}\d^{d-2+\e},
\end{equation}
where \eqref{4.5} is proved in Lemma \ref{lemma 4.1_i} and it remains to prove \eqref{4.6}.
\vskip 0.2cm
Let $\{\xi_j\}_{j\in\{1,\ldots,[M2^{-4}(\log_2\frac{1}{\d})^{-2}]\}}$ be the directions of $T^\d_j$.
Noting that $\sg\geq \d$, we have $\gamma:=\frac{100 \sg}{\d^\e\ld^{d-2}}\geq\d^{1-\e}$ since $\ld\leq 1$ and $d\geq 3$.
If $\gamma\geq \frac\pi2$, then \eqref{4.6} follows immediately from \eqref{4.7}.\\

Otherwise, we can take a maximal $\gamma-$separated subsequence of $\{\xi_j\}$ and denote them by $\{\xi_{j_k}\}^{M_0}_{k=1}$.
By maximality, we obtain for some $C_2>0$
$$
M_0\geq C_2\frac{M}{2^4\Big(\log_2\frac{1}\d\Big)^2}\d^{d-1}\Big(\frac{\d^\e\ld^{d-2}}{\sg 100}\Big)^{d-1}\geq C_2\frac{M\d^{d-1 }}{2^4\Big(\log_2
\frac{1}\d\Big)^2}\Big(\frac{\d^\e\ld^{d-2}}{\sg 100}\Big)^{d-1}
.$$
 and
use Lemma \ref{lemma 4.2} with $\rho=C_\e\ld^3\sg^{2-d}\d^{d-2+\e}N$ as well as \eqref{4.7} to get
\begin{align*}
|E|&\geq C_\e \ld^3\sg^{2-d}\d^{d-2+\e}N\times\sg^{d-1}\times\frac{M^{\frac{1}{2}}_0}2\\
&\geq\frac{C_\e}2\ld^3\sg\d^{d-2+\e}N\times\Big(\frac{C_2}4 M\d^{d-1-\e c_3}\Big)^{\frac{1}{d-1}}\times\frac{\d^\e\ld^{d-2}}{\sg 100}\\
&\geq\tilde{C}_\e\ld^{d+1}\d^{d-2+(2+\frac{1}{d-1})\e}(M\d^{d-1})^{\frac{1}{d-1}}N,
\end{align*}
which implies \eqref{4.6}, since $\e>0$ is arbitrarily small.
\end{proof}

\begin{remarque}
In the second step, we have used $M^{\frac{1}2}_0\geq M^{\frac1{d-1}}_0$ for $d\geq 3$.
Since we can only verify \eqref{4.7} for $d=3$, this loss caused by cutting $\frac{1}{2}$ down to $\frac1{d-1}$ is dismissed.
However, this loss appears to be significant when one deals with the higher dimensional cases with $d\geq 4$.
\end{remarque}

\section{Completion of the proof to Theorem \ref{thm 1}}
In this section, we confine ourselves in the case when $d=3$ and prove \eqref{4.7} using Corollary \ref{coral_L2}. This will complete the proof of
Wolff's $L^{\frac5{2}}-$bound for Kakeya maximal functions.
Before proving \eqref{4.7}, we first prove a simplified version.
\begin{lemme}\label{lemma 4.3}
Let $d=3$ and $N$ satisfy both scenario I and  II$_{\theta\sg}$. Denote the $M2^{-4}\Big(\log_2\frac1{\d}\Big)^{-2}$ many tubes by $\{T^\d_j\}$
 in II$_{\theta\sg}$. For any $\e>0$, there exists a $C_\e>0$ such that for $\d>0$ sufficiently small, we have
\begin{equation}\label{simple4.7}
|E\cap T^\sg_j|\geq C_\e\ld^3\sg\d^{1+\e}N.
\end{equation}
\end{lemme}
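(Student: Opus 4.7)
The plan is to apply Corollary \ref{coral_L2} to the indicator function $f=\chi_{E_j}$, where
$$E_j=\{y\in E:\text{dist}(y,\gamma_j)\in[\sg/2,\sg)\}\subset E\cap T^\sg_j,$$
and to extract \eqref{simple4.7} by matching the resulting $L^2$ upper bound against a lower bound on $\|A^\theta_{\d,j}(f)\|_{L^2(S^2)}$ supplied by the scenario II$_{\theta\sg}$ structure. Corollary \ref{coral_L2} gives $\|A^\theta_{\d,j}(f)\|^2_{L^2(S^2)}\lesssim(\log\tfrac{1}{\d})^2|E\cap T^\sg_j|$.

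For the matching lower bound, I would exploit the fact that for each \emph{good direction} $\xi^i$---meaning $i\in\mathcal{I}_{\theta,\sg}(x,j)$ for some $x$ in the good subset $G_j\subset T^\d_j\cap E$ produced by scenario II$_{\theta\sg}$---testing the supremum defining $A^\theta_{\d,j}(f)(\xi^i)$ against the specific tube $T^\d_i$ yields
$$A^\theta_{\d,j}(f)(\xi^i)\,\geq\,\frac{1}{|T^\d_i|}\int_{T^\d_i\cap E_j}\w^j_i(y)\,dy\,\gtrsim\,\Big(\log\tfrac{1}{\d}\Big)^{-1}\ld\sqrt{\sg/\theta}.$$
The factor $\sqrt{\sg/\theta}$ encodes the geometry of the weight: for $y\in T^\d_i\cap E_j$ the closest approach point $\gamma_j\wedge\gamma_i$ lies within $O(\d)$ of the meeting point of the two axes, whereas $y$ lies at arclength $\sim\sg/\sin\theta\sim\sg/\theta$ along $\gamma_i$ from that meeting point, so $\w^j_i(y)=\sqrt{\text{dist}(y,\gamma_j\wedge\gamma_i)}\sim\sqrt{\sg/\theta}$. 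A standard perturbation argument then promotes this pointwise lower bound to a uniform bound on a $c\d$-cap around each $\xi^i$.

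Next, a double counting bounds from below the number $K$ of distinct good directions. Summing $|\mathcal{I}_{\theta,\sg}(x,j)|$ over $x\in G_j$, the II$_{\theta\sg}$ hypotheses give a total of $\gtrsim(\log\tfrac{1}{\d})^{-4}\ld N\d^2$ incidences; reorganizing this as $\sum_{i\text{ good}}|G_j\cap T^\d_i|$ and invoking the standard estimate $|T^\d_i\cap T^\d_j|\sim\d^3/\theta$ for two $\d$-tubes in $\R^3$ meeting at angle $\sim\theta$ yields $K\gtrsim(\log\tfrac{1}{\d})^{-4}\ld N\theta/\d$. Since the $\{\xi^i\}$ are $\d$-separated on $S^2$, combining the count with the pointwise bound gives
$$\|A^\theta_{\d,j}(f)\|^2_{L^2(S^2)}\,\gtrsim\,K\cdot\d^2\cdot\Big(\log\tfrac{1}{\d}\Big)^{-2}\ld^2(\sg/\theta)\,\gtrsim\,\Big(\log\tfrac{1}{\d}\Big)^{-6}\ld^3 N\sg\d,$$
where crucially the $\theta$ coming from $K$ cancels exactly against the $1/\theta$ coming from the squared weight. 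Comparing this against the $L^2$ upper bound produces $|E\cap T^\sg_j|\gtrsim(\log\tfrac{1}{\d})^{-8}\ld^3 N\sg\d$, which absorbs into $\d^\e$ at the cost of a constant $C_\e$ for $\d$ small enough.

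The main obstacle is the geometric verification of the weight estimate $\w^j_i(y)\sim\sqrt{\sg/\theta}$ together with the perturbation argument propagating the pointwise lower bound from $\xi^i$ to a $c\d$-cap; a secondary issue is confirming that $E_j$ is effectively contained in $T^\sg_j$ so that $\|f\|_2^2$ really is controlled by $|E\cap T^\sg_j|$. The crucial observation making the whole scheme succeed is the $1/2$-exponent built into $\w^j_\xi$: any other exponent would leave an uncancelled power of $\theta$ in the final estimate and the argument would collapse.
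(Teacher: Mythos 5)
Your proposal is correct and follows essentially the same route as the paper's proof: the same lower bound $\w^j_i(y)\gtrsim(\sg/\theta)^{1/2}$ on the sets coming from the definition of $\mathcal{I}_{\theta,\sg}$, the same counting of $\gtrsim \theta\d^{-1}\ld N\bigl(\log_2\tfrac1\d\bigr)^{-4}$ good directions via the intersection bound $|T^\d_i\cap T^\d_j|\lesssim\d^3/\theta$, and the same comparison of the resulting discrete $L^2$ lower bound with Corollary \ref{coral_L2}. The only differences (testing $\chi_{E_j}$ rather than $\chi_{E\cap T^\sg_j}$, and phrasing the direction count as an incidence double counting rather than integrating $\sum_k\chi_{T^\d_{i_k}}$ over $S^\d_j$) are cosmetic.
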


\begin{proof}
For any $j\in\Big\{1,\ldots,\Big[M2^{-4}\Big(\log_2\frac1{\d}\Big)^{-2}\Big]\Big\}$, we define
$$
S^\d_j\mathrel{\mathop=^{\rm def}}T^\d_j\cap E\cap\lt\{x:\text{Card}\;\mathcal{I}_{\theta,\sg}(x,j)\geq 2^{-3}N\Big(\log_2\frac{1}\d\Big)^{-2}\rt\}.
$$
By definition of $\mathcal{I}_{\theta,\sg}(x,j)$, we see that there exists an $M_0\in(0,M]$ and a subcollection $\{T^\d_{i_k}\}^{M_0}_{k=1}$ of
$\{T^\d_i\}^M_{i=1}$ such that
\begin{equation}\label{(1)}
\angle(T^\d_{i_k},T^\d_j)\in\Big[\frac{\theta}2,\theta\Big),
\end{equation}

\begin{equation}\label{(2)}
\lt|T^\d_{i_k}\cap E\cap\lt\{y:\text{dist}(y,\gamma_j)\in\lt[\frac{\sg}{2},\sg\rt)\rt\}\rt|\geq \lt(2^4\log_2\frac{1}{\d}\rt)^{-1}\ld|T^\d_{i_k}|,
\end{equation}
and\footnote{It is a little tricky here. We first fix $j$ and $x\in S^\d_j$ then we get the subcollection with condition \eqref{(1)} and \eqref{(2)}.
However, this subcolletion may depend on $x$. In order to avoid this dependency, we consider all the possible subcollections, take their union and
denote $M_0$ as the total number of the tubes included,
then we are safe with our argument without causing confusions. }
\begin{equation}\label{(3)}
\Big(\sum^{M_0}_{k=1}\chi_{T^\d_{i_k}}\Big)\Big|_{S^\d_j}\geq \frac {N}{2^3}\lt(\log_2\frac1{\d}\rt)^{-2}.
\end{equation}
Moreover, we have from the definition of II$_{\theta,\sigma}$, \eqref{(3)} and $S^\d_j\subset T^\d_j$
\begin{align*}
2^{-3}\frac{\ld}{(4\log_2\frac{1}\d)^2}|T^\d_j|
&\leq|S^\d_j|\leq 2^3\bigl(2\log_2\frac{1}{\d}\bigr)^2N^{-1}\int_{T^\d_j}\sum^{M_0}_{k=1}\chi_{T^\d_{i_k}}(x)dx\\
&\leq N^{-1}2^3\Big(2\log_2\frac{1}\d\Big)^2\sum^{M_0}_{k=1}|T^\d_{i_k}\cap T^\d_j|\leq N^{-1}2^3\Big(\log_2\frac{1}\d\Big)^2 8\d^3M_0/\theta,
\end{align*}
where we have used
$|T^\d_{i_k}\cap T^\d_{j}|\leq \frac{\d^3}{\theta}$.
Hnece we conclude
\begin{equation}\label{equ5.4}
M_0\geq 2^{-10}\theta\d^{-1}N\ld\Big(\log_2\frac{1}\d\Big)^{-4}.
\end{equation}

\begin{figure}
\begin{center}
$$\ecriture{\includegraphics[width=6cm]{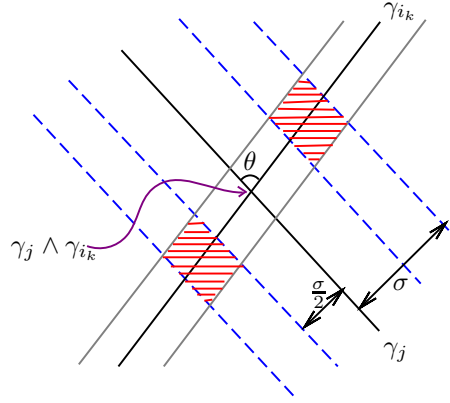}}
{\aat{41}{13}{$\sg$}\aat{32}{12}{$\frac{\sg}2$}\aat{25}{26}{$\theta$}\aat{0}{17}{$\gamma_j\wedge\gamma_{i_k}$}\aat{40}{6}{$\gamma_j$}\aat{40}{43}
{$\gamma_{i_k}$}}$$
\end{center}
\caption{ $T^\d_{i_k}\cap\{y:\text{dist}(y,\gamma_j)\in[\sg/2,\sg)\}$ is indicated by the shaded region.}
\end{figure}

Now, for any $T^\d_{i_k}$, we have ( see Figure 7 )
\begin{align}\label{net}
|T^\d_{i_k}|^{-1}&\int_{T^\d_{i_k}}\chi_{E\cap T^\sg_j}(y)\w^j_{i_k}(y)dy\\
\no&\geq |T^\d_{i_k}|^{-1}\int_{T^\d_{i_k}\cap E\cap\{y:\text{dist}(y,\gamma_j)\in[\sg/2,\sg)\}}[\text{dist}(y,\gamma_{i_k}\wedge\gamma_j)]^{\frac{1}2}dy
\\
\no&\geq\lt(\frac{\sg}\theta\rt)^{\frac{1}2}|T^\d_{i_k}|^{-1}\cdot|T^\d_{i_k}\cap E\cap\{y:\text{dist}(y,\gamma_j)\in[\sg/2,\sg)\}|\\
\no&\geq\Big(2^4\log_2\frac{1}\d\Big)^{-1}\lt(\frac{\sg}\theta\rt)^{\frac{1}2}\ld.
\end{align}
On the other hand,
$$|T^\d_{i_k}|^{-1}\int_{T^\d_{i_k}}\chi_{E\cap T^\sg_j}(y)\w^j_{i_k}(y)dy\leq A^\theta_{\d,j}(\chi_{E\cap T^\sg_j})(\xi_{i_k}).$$
Squaring both sides, multiplying $\d^2$ and summing up with respect to $k=1,\ldots,M_0$, we have
\begin{align*}
M_0\d^2\Big(2\log_2\frac{1}\d\Big)^{-2}\frac{\ld^2\sg}\theta&\leq\sum^{M_0}_{k=1}\Big|A^\theta_{\d,j}(\chi_{E\cap T^\sg_j})(\xi_{i_k})\Big|^2\d^2\\
&\lesssim\int_{S^2}\Big|A^\theta_{\d,j}(\chi_{E\cap T^\sg_j})(\xi)\Big|^2d\Sigma(\xi)\\
&\lesssim\bigl(\log\frac{1}{\d}\bigr)|E\cap T^\sg_j|,
\end{align*}
where the last step involves the $L^2-$estimate  \eqref{3d_l2}.

Invoking the lower bound \eqref{equ5.4}, we obtain
\eqref{simple4.7}.
\end{proof}

\begin{proposition}\label{final lemma}
If $d=3$, then \eqref{4.7} holds.
\end{proposition}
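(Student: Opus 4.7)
The plan is to rerun the proof of the simplified estimate \eqref{simple4.7} with the set $E\cap T^\sg_j$ replaced throughout by $E\cap T^\sg_j\cap B(a,\d^\e\ld)^c$. All of the combinatorial input used there---the subcollection $\{T^\d_{i_k}\}_{k=1}^{M_0}$ satisfying \eqref{(1)}--\eqref{(3)} and the lower bound \eqref{equ5.4} on $M_0$---comes solely from scenarios I and II$_{\theta\sg}$ and is therefore unaffected by the removal of the ball. The only place where the target set actually enters the argument is the pointwise lower bound \eqref{net}, so it is enough to check that \eqref{net} survives, up to a factor of $2$, after restricting the integration to $B(a,\d^\e\ld)^c$.

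To this end, I would start from \eqref{(2)}, which delivers the annular slice $T^\d_{i_k}\cap E\cap\{y:\text{dist}(y,\gamma_j)\in[\sg/2,\sg)\}$ of measure at least $(2^4\log_2\tfrac1\d)^{-1}\ld|T^\d_{i_k}|$. A trivial geometric estimate---a $\d$-tube of length $1$ crossed with a ball of radius $r\leq 1$ has volume $\lesssim r\d^2$---yields $|T^\d_{i_k}\cap B(a,\d^\e\ld)|\leq C\d^\e\ld\,|T^\d_{i_k}|$. Since $\d^\e$ beats any inverse power of $\log\tfrac1\d$ for $\d$ sufficiently small relative to $\e$, we have $C\d^\e\leq\tfrac12(2^4\log_2\tfrac1\d)^{-1}$ and hence
\[
\bigl|T^\d_{i_k}\cap E\cap\{y:\text{dist}(y,\gamma_j)\in[\sg/2,\sg)\}\cap B(a,\d^\e\ld)^c\bigr|\geq\bigl(2^5\log_2\tfrac1\d\bigr)^{-1}\ld|T^\d_{i_k}|.
\]
Since the weight $\w^j_{i_k}(y)\gtrsim(\sg/\theta)^{1/2}$ on this annular slice depends only on the geometry of $\gamma_j$ and $\gamma_{i_k}$ (and not on $a$), the same chain of inequalities as in \eqref{net} delivers
\[
|T^\d_{i_k}|^{-1}\int_{T^\d_{i_k}}\chi_{E\cap T^\sg_j\cap B(a,\d^\e\ld)^c}(y)\,\w^j_{i_k}(y)\,dy\geq\bigl(2^5\log_2\tfrac1\d\bigr)^{-1}\bigl(\sg/\theta\bigr)^{1/2}\ld.
\]

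From here the remainder of the argument in the simplified Lemma \ref{lemma 4.3} is copied verbatim: squaring both sides, multiplying by $\d^2$, summing over $k=1,\dots,M_0$ and invoking Corollary \ref{coral_L2} applied to $A^\theta_{\d,j}(\chi_{E\cap T^\sg_j\cap B(a,\d^\e\ld)^c})$ gives $M_0\d^2(\log\tfrac1\d)^{-2}\ld^2\sg/\theta\lesssim(\log\tfrac1\d)^2\,|E\cap T^\sg_j\cap B(a,\d^\e\ld)^c|$, and then inserting the lower bound \eqref{equ5.4} on $M_0$ and absorbing all the logarithmic losses into the available slack $\d^{-\e}$ yields \eqref{4.7} for $d=3$. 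The only real obstacle is the volumetric comparison asserting that $B(a,\d^\e\ld)$ can remove at most half the mass from the annular slice in each $T^\d_{i_k}$; however, this is disposed of by the elementary tube-versus-ball bound together with the slack $\d^\e\ll 1/\log\tfrac1\d$, so in the end the proof is essentially the same as the simplified one with one extra subtraction step.
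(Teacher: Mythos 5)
Your proposal is correct and follows essentially the same route as the paper: the key point in both is the volumetric bound $|T^\d_i\cap B(a,\d^\e\ld)|\lesssim\d^\e\ld|T^\d_i|\ll\bigl(\log_2\tfrac1\d\bigr)^{-1}\ld|T^\d_i|$, which shows the ball removes at most half the mass of each annular slice in \eqref{(2)}, after which the argument of Lemma \ref{lemma 4.3} (\eqref{net}, the $L^2$ bound \eqref{3d_l2}, and \eqref{equ5.4}) goes through unchanged. The only difference is organizational: you rerun the proof of \eqref{simple4.7} directly with the target set $E\cap T^\sg_j\cap B(a,\d^\e\ld)^c$, while the paper packages the same subtraction into a modified multiplicity set $\mathcal{\tilde I}_{\theta,\sg}$ and then re-invokes Lemma \ref{lemma 4.3} with $E$ replaced by $E\cap B(a,\d^\e\ld)^c$ and $\ld/2$ in place of $\ld$.
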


\begin{proof}
For $i\in \mathcal{I}_{\theta,\sg}(x,j)$, we have by choosing $\d$ small
\begin{align*}
\Big|T^\d_i\cap\Big\{y\in E\cap B(a,\d^\e\ld)^c&:\text{dist}(y,\gamma_j)\in[\sg/2,\sg]\Big\}\Big|\\
&\geq\Big(2^4\log_2\frac{1}\d\Big)^{-1}\ld|T^\d_i|-\d^\e\ld|T^\d_i|\geq\Big(2^5\log_2\frac{1}\d\Big)^{-1}\ld|T^\d_i|.
\end{align*}
If we define
\begin{align*}
\mathcal{\tilde{I}}_{\theta,\sigma}(x,j)\mathrel{\mathop=^{\rm def}}
&\Big\{i:\,\chi_{T^\d_i}(x)=1,\,\angle(T^\d_i,T^\d_j)\in \Big[\frac{\theta}{2},\theta\Big),
\\
\no&\q\q\q\q\q\q
\Big|T^\d_i\cap\Big\{y\in E\cap B(a,\d^\e\ld)^c:\text{dist}(y,\gamma_j)\in\Big[\frac{\sigma}{2},\sigma\Big)\Big\}\Big|
\geq\Big(2^5\log_2\frac{1}{\d}\Big)^{-1}\ld|T^\d_i|\Big\},
\end{align*}
then, clearly $\mathcal{I}_{\theta,\sg}(x,j)\subset\mathcal{\tilde{I}}_{\theta,\sg}(x,j)$, which gives
$\text{Card}\;\mathcal{I}_{\theta\sg}(x,j)\leq\text{Card}\;\mathcal{\tilde{I}}_{\theta\sg}(x,j)$.
Since there are at least $M2^{-4}\Big(\log_2\frac{1}\d\Big)^{-2}$ many $j$'s satisfying II$_{\theta\sg}$, we have for each such $j$
$$
2^{-3}\Big(4\log_2\frac{1}\d\Big)^{-2}\ld|T^\d_j|\leq\lt|\lt\{x\in T^\d_j\cap E\cap B(a,\d^\e\ld)^c:\text{Card}\,\mathcal{\tilde{I}}_{\theta,\sg}(x,j)
\geq2^{-3}\Big(\log_2\frac{1}\d\Big)^{-2}N\rt\}\rt|+\d^\e\ld|T^\d_j|.
$$
Taking $\d$ small, we obtain for this $j$
$$
2^{-3}\Big(4\log_2\frac{1}\d\Big)^{-2}\frac\ld{2}|T^\d_j|\leq\lt|\lt\{x\in T^\d_j\cap E\cap B(a,\d^\e\ld)^c:\text{Card}\,\mathcal{\tilde{I}}_{\theta,\sg}
(x,j)
\geq\Big(2\log_2\frac{1}\d\Big)^{-2}N\rt\}\rt|.
$$
Replacing $E$ in lemma \ref{lemma 4.3} with $E\cap B(a,\d^\e\ld)^c$ and using \eqref{simple4.7} with $\ld/2$  instead of $\ld$, we finally conclude
\eqref{4.7} for $d=3$. Therefore, we complete the proof of of our main theorem.
\end{proof}

\section{Appendix}

\subsection{The local property of Kakeya maximal function inequality}
In this section, we shall see the problem on Kakeya maximal inequality is local.
Namely, to derive \eqref{wolf-red}, we can assume $f$ is supported in a ball of finite size.
In particular, we may assume $f$ is supported in the unit ball centered at zero.
To show that the general inequality \eqref{k-conj-end} for $f$ defined on $\R^d$ follows from its localized version, we first choose
 a maximal $\d-$separated subset $\{\xi^k\}_{k\in\mathfrak K}$ in $S^{d-1}$ with ${\rm Card} \mathfrak K\sim \d^{-(d-1)}$,
  and write for a locally integrable function $f$
\begin{align}\label{6.1}
\int_{S^{d-1}}|f^*_\d(\xi)|^q d\Sigma(\xi)
\lesssim\sum_{k\in\mathfrak K}\int_{\angle(\xi,\xi^k)\leq \d}|f^*_\d(\xi)|^q d\Sigma(\xi).
\end{align}

\begin{figure}[ht]
\begin{center}
$$\ecriture{\includegraphics[width=2cm]{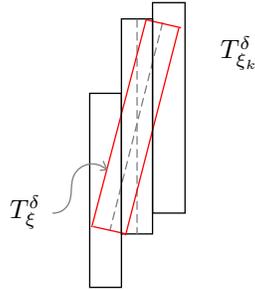}}
{\aat{-10}{26}{$T^\d_\xi$}\aat{58}{79}{$T^\d_{\xi_k}$}}$$
\end{center}
\caption{  $T^\d_{\xi}$ is covered by the translates of $T^\d_{\xi_k}$.}
\end{figure}
Since $\angle (\xi,\xi_k)\leq\d$, there is a $c=c(d)>0$ independent of $\d$ such that $T^\d_\xi$ is covered by
a union of at most $c$ many parallel translates of the tube $T^\d_{\xi^k}$ ( see Figure 8). Moreover, there are two uniform constants
$c_1,c_2$ depending only on $d$ such that
$$
c_1 f^*_\d(\xi^k)\leq f^*_\d(\xi)\leq c_2 f^*_\d(\xi^k),\q \forall \angle (\xi,\xi^k)\leq \d.
$$
Hence \eqref{6.1} is bounded up to some constant depending only on $d$ by
\begin{equation}\label{discretized}
\sum_{k\in\mathfrak K}|f^*_\d(\xi^k)|^q\d^{d-1}.
\end{equation}
By definition of $f^*_\d(\xi^k)$, there is a tube $T^\d_k:=T^\d_{\xi^k}(a_k)$ in direction of $\xi^k$
such that
$$\frac1{T^\d_k}\int_{T^\d_k}|f|(y)dy\geq\, \frac 12\, f^*_\d(\xi^k).$$
Similarly for any $\xi\in S^{d-1}$ with $\angle(\xi,\xi_k)\leq \d$, there is a tube $T^\d_\xi(a)$ so that
$$
\frac{1}{| T^\d_\xi|}\int_{T^\d_\xi}|f|(y)dy\geq\,\frac 12\,f^*_\d(\xi).
$$
Now, we take a maximal $1-$seperated subset of $\{a_k\}_{k\in\mathfrak K}$. After relabeling the indices, we may denote this subsequence
by $\{a_j\}^J_{j=1}$ with $J\leq {\rm Card } \mathfrak K$. Thus, for any $k\in \mathfrak K$, there is some $j\in\{1,\ldots,J\}$ such that
 $|a_k-a_j|\leq 1$, and hence  $T^\d_k\subset B(a_j,2)$. Based on this observation, we may write
\begin{align}
\nonumber\eqref{discretized}\lesssim&\sum^J_{j=1}\sum_{k:|a_k-a_j|\leq 1}|\bigl(f\chi_{B(a_j,2)}\bigr)^*_\d(\xi^k)|^q\d^{d-1}\\
\nonumber\lesssim&\sum^J_{j=1}\sum_{k:|a_k-a_j|\leq 1}\int_{\angle(\xi,\xi^k)\leq\d}|\bigl(f\chi_{B(a_j,2)}\bigr)^*_\d(\xi)|^q d\Sigma(\xi)\\
\label{7.3}\lesssim & \sum^J_{j=1}\int_{S^{d-1}}|\bigl(f\chi_{B(a_j,2)}\bigr)^*_\d(\xi)|^q d\Sigma(\xi).
\end{align}

For $q\geq p$, assume that $\|f^*_\d\|_{L^q(S^{d-1})}\lesssim_\e \d^{-\frac dp+1-\e}\|f\|_{L^p(B(a,2))}$ for all $a\in\R^d$.
We have by finite overlaps of the balls $\{B(a_j,2)\}^J_{j=1}$ and Minkowski's inequality
\begin{align*}
\eqref{7.3}\lesssim_\e & \d^{-(\frac dp-1)q-q\e}  \sum^J_{j=1}\Bigl(\int_{\R^d}|\bigl(f\chi_{B(a_j,2)}\bigr)(x)|^p dx\Bigr)^{\frac qp}\\
\lesssim_\e&\d^{-(\frac dp-1)q-q\e} \|f\|^q_{L^p(\R^d)}.
\end{align*}
This yields the same estimate for general $f$.

\subsection{The implication of \eqref{wolf-red} to \eqref{wolff}}

As pointed in \cite{ref wolff2}, Drury \cite{Dr} had shown the following estimate
\begin{equation}
\|f_\delta^\ast\|_{L^{d+1}(S^{d-1})}\leq C_\varepsilon\delta^{-\frac{d-1}{d+1}-\varepsilon}\|f\|_{L^\frac{d+1}2(\R^d)}.
\end{equation}
We will use this fact as well as the following two estimates
\begin{equation}
\begin{cases}
\|f_\delta^\ast\|_{L^\infty(S^{d-1})}\leq \|f\|_{L^\infty(\R^d)},\\
\|f_\delta^\ast\|_{L^{p,\infty}(S^{d-1})}\leq C_\varepsilon\delta^{-\frac{d}q+1-\varepsilon}\|f\|_{L^{q,1}(\R^d)},~p=(d-1)q',
\end{cases}
\end{equation}
to derive
\begin{equation}
\|f_\delta^\ast\|_{L^{p}(S^{d-1})}\leq C_\varepsilon\delta^{-\frac{d}q+1-\varepsilon}\|f\|_{L^{q}(\R^d)},~p=(d-1)q'.
\end{equation}

We summarize this as the following lemma.
\begin{lemme}
Assume $T$ is a sublinear operator, $1\ll A, B<\infty$ and for
$p=(d-1)q',~q>\frac{d+1}2,$
\begin{align}\label{js1}
\|Tf\|_{L^\infty(S^{d-1})}\leq& \|f\|_{L^\infty(\R^d)},\\\label{js2}
\|Tf\|_{L^{d+1}(S^{d-1})}\leq& A\|f\|_{L^\frac{d+1}2(\R^d)},\\\label{js3}
\|Tf\|_{L^{p,\infty}(S^{d-1})}\leq& B\|f\|_{L^{q,1}(\R^d)},
\end{align}
then for any $\e>0$, there holds that
\begin{equation}\label{jl}
\|Tf\|_{L^{p}(S^{d-1})}\leq BA^\e\|f\|_{L^{q}(\R^d)}.
\end{equation}
\end{lemme}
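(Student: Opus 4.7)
My strategy is a real interpolation argument between the strong-type Drury bound \eqref{js2} and the restricted weak-type hypothesis \eqref{js3}, combined with H\"older embeddings on domains of finite measure to recover the precise endpoint $(q,p)$. The role of \eqref{js1} is to permit the reduction of general $f\in L^q(\R^d)$ to compactly supported $f$ via the localization argument of Subsection~6.1.

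First I would reduce, by the local property already established in Subsection~6.1, to the case where $f$ is supported in a fixed ball $B(0,2)\subset\R^d$. Then, applying the real interpolation theorem (Marcinkiewicz--Stein--Weiss in its Lorentz form) to \eqref{js2} and \eqref{js3}, one obtains for each $\theta\in(0,1)$ an estimate
\[
\|Tf\|_{L^{p_\theta}(S^{d-1})}\le C_\theta\,A^{1-\theta}B^\theta\|f\|_{L^{q_\theta}(\R^d)},
\]
where
\[
\Big(\tfrac{1}{q_\theta},\tfrac{1}{p_\theta}\Big)=(1-\theta)\Big(\tfrac{2}{d+1},\tfrac{1}{d+1}\Big)+\theta\Big(\tfrac{1}{q},\tfrac{1}{p}\Big).
\]
Concretely, by fixing the Lorentz second index equal to $p_\theta$ and using the elementary inclusion $L^{q_\theta}\subset L^{q_\theta,p_\theta}$ (valid because $q_\theta\le p_\theta$), one converts the interpolated Lorentz bound into a genuine strong-type estimate. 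Since $q>(d+1)/2$, one verifies $q_\theta<q<p<p_\theta$ for every $\theta<1$, so the target and source exponents lie on the correct side of the endpoint $(q,p)$.

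Given $\epsilon>0$, I would set $\theta=1-\epsilon$, whereupon $A^{1-\theta}B^\theta=A^\epsilon B^{1-\epsilon}\le BA^\epsilon$. To transfer the bound from $(q_\theta,p_\theta)$ back to the true endpoint $(q,p)$ I apply two H\"older embeddings on domains of finite measure: $L^{p_\theta}(S^{d-1})\hookrightarrow L^p(S^{d-1})$ with norm $|S^{d-1}|^{1/p-1/p_\theta}$, and $L^q(B(0,2))\hookrightarrow L^{q_\theta}(B(0,2))$ with norm $|B(0,2)|^{1/q_\theta-1/q}$. Both constants are finite and depend only on $\epsilon$ and $d$, and chaining them gives $\|Tf\|_{L^p(S^{d-1})}\le C_\epsilon BA^\epsilon\|f\|_{L^q(\R^d)}$. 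The $\epsilon$-dependent factor $C_\epsilon$ is then absorbed into $A^\epsilon$ by a harmless enlargement of $\epsilon$, since the hypothesis $1\ll A$ ensures $C_\epsilon\le A^\delta$ for any preassigned $\delta>0$.

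The main technical obstacle is that real interpolation by itself does \emph{not} yield the strong-type bound at the exact endpoint $(q,p)$: the Marcinkiewicz constant $C_\theta$ diverges as $\theta\to 1$, reflecting the fact that $(q,p)$ is precisely one of the two interpolation endpoints. My resolution is the two-step scheme above, trading interpolation at an intermediate point close to $(q,p)$ (which costs the factor $A^\epsilon$) against H\"older embeddings on bounded domains (available only after the reduction to compactly supported $f$ and thanks to the finite measure of $S^{d-1}$) to land on the target endpoint. The assumption $1\ll A,B$ is precisely what makes this overall trade favorable and permits the absorption of all interpolation constants into $A^\epsilon$.
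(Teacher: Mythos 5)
Your interpolation mechanism is sound as far as it goes: interpolating \eqref{js2} and \eqref{js3} via the off-diagonal Marcinkiewicz--Hunt theorem does give $T:L^{q_\theta}\to L^{p_\theta}$ with norm $\lesssim_\theta A^{1-\theta}B^{\theta}$, and for $\theta<1$ one indeed has $q_\theta<q<p<p_\theta$, so both H\"older embeddings point the right way on finite-measure domains, and the blow-up of the interpolation constant as $\theta\to1$ can be absorbed using $1\ll A$. The genuine gap is the reduction to compactly supported $f$, on which your whole scheme depends through the source-side embedding $L^{q}\hookrightarrow L^{q_\theta}$ (with $q_\theta<q$): for a general $f\in L^{q}(\mathbb{R}^{d})$ this embedding fails ($\|f\|_{L^{q_\theta}}$ may be infinite), and nothing in the hypotheses of the lemma supplies the localization. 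The lemma is stated for an abstract sublinear $T$; the localization argument of Subsection~6.1 is not a consequence of \eqref{js1}--\eqref{js3} but of the specific tube geometry of the Kakeya maximal operator (covering $T^{\delta}_{\xi}$ by translates of $T^{\delta}_{\xi^{k}}$ and placing the tube realizing $f^{*}_{\delta}(\xi^{k})$ inside a ball $B(a_{j},2)$). In particular, your claim that \eqref{js1} ``permits the reduction to compactly supported $f$'' is a misattribution: the $L^{\infty}$ bound gives no localization for a general $T$. Since $(q,p)$ is itself one of the interpolation endpoints, interpolation alone cannot reach it, so once the compact-support reduction is removed the argument collapses for general $f$.

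For contrast, the paper's proof needs no support restriction: for each level $\lambda$ it splits $f=f_{1}+f_{2}+f_{3}$ by height, with $f_{1}=f\chi_{|f|<\lambda/3}$, $f_{2}=f\chi_{|f|>A^{\alpha}\lambda}$, $\alpha=\tfrac{2q}{d+1}-1$, and uses the layer-cake formula; \eqref{js1} makes the $f_{1}$-contribution vanish, \eqref{js2} together with the trivial bound $\nu\lesssim1$ handles $f_{2}$ (the choice of $\alpha$ makes the powers of $A$ cancel), and \eqref{js3} handles $f_{3}$, whose values live in a range of multiplicative length $A^{\alpha}$, so $\|f_{3}\|_{L^{q,1}}\lesssim(1+\alpha\log A)^{1-\frac1q}\|f_{3}\|_{L^{q}}$ and the $\lambda$-integration costs only another $\log A$; the total $(\log A)^{O(1)}$ loss is absorbed into $A^{\varepsilon}$. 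To salvage your route you would either have to prove the lemma only for compactly supported $f$ and invoke the Kakeya-specific localization at the point of application (a weaker statement, valid only for that operator), or replace the interpolation-plus-H\"older step by a height decomposition of the above type---at which point you have reproduced the paper's argument.
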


\begin{proof}
We write $f=f_1+f_2+f_3$ with
$$f_1=f\chi_{|f|<\frac{\lambda}3},~f_2=f\chi_{|f|>A^\alpha\lambda},~f_3=f\chi_{\frac{\lambda}3\le|f|\le A^\alpha\lambda},~\alpha=\frac{2q}{d+1}-1.$$

From the layer cake
representation theorem  in \cite{Lieb}, we obtain
\begin{align*}
\|Tf\|_{L^p(S^{d-1})}^p
=&p\int_0^{+\infty}\lambda^{p-1}\nu\big(\{|Tf|>\lambda\}\big)d\lambda\\
\leq&p\int_0^{+\infty}\lambda^{p-1}\Big[\nu\big(\{|Tf_1|>\lambda/3\}\big)+\nu\big(\{|Tf_2|>\lambda/3\}\big)+\nu\big(\{|Tf_3|>\lambda/3\}
\big)\Big]d\lambda\\
\triangleq& I_1+I_2+I_3.
\end{align*}
It is easy to see that $I_1=0$ since $\nu\big(\{|Tf_1|>\lambda/3\}\big)=0$ by \eqref{js1}.
To estimate $I_2$, we use  \eqref{js2} to deduce that
\begin{equation}
\nu\big(\{|Tf_2|>\lambda/3\}\big)\lesssim\frac{A^{d+1}}{\lambda^{d+1}}\|f\|_{L^\frac{d+1}2}^{d+1}.
\end{equation}
This together with the trivial estimate
$$\nu\big(\{\xi\in S^{d-1}:~|Tf_2(\xi)|>\lambda/3\}\big)\lesssim1$$
implies that
\begin{equation}
\nu\big(\{|Tf_2|>\lambda/3\}\big)\lesssim\frac{A^{k}}{\lambda^{k}}\|f\|_{L^\frac{d+1}2}^{k},~0\le k\le d+1.
\end{equation}
Hence, we get by Minkowski's inequality
\begin{align*}
I_2=&p\int_0^{+\infty}\lambda^{p-1}\nu\big(\{|Tf_2|>\lambda/3\}\big)d\lambda
\lesssim A^k\int_0^{+\infty}\lambda^{p-1-k}\|f_2\|_{L^\frac{d+1}2}^{k}d\lambda\\
\lesssim& A^k\bigg(\int_{\R^d}|f|^\frac{d+1}2\Big(\int_0^{+\infty}\lambda^{p-1-k}\chi_{|f|>A^\alpha\lambda}d\lambda\Big)^\frac{d+1}{2k}
\bigg)^{\frac{2k}{d+1}}\\
\lesssim& A^kA^{-\alpha(p-k)}\Big(\int_{\R^d}|f|^{\frac{d+1}2p}dx\Big)^{\frac{2k}{d+1}}
\simeq\|f\|_{L^q}^p,
\end{align*}
where we have used $k=\frac{d+1}{2q}p$ and $\alpha=\frac{2q}{d+1}-1$ in the last step.

Finally, we turn to estimate $I_3$. By \eqref{js3} and the characterization of $L^{p,q}$ spaces, one has
\begin{equation}
\nu\big(\{|Tf_3|>\lambda/3\}\big)\leq\frac{B^p}{\lambda^p}\|f_3\|_{L^{q,1}}^p\lesssim \frac{B^p}{\lambda^p}(1+\alpha\log A)^{p-\frac{p}q}
\|f_3\|_{L^q}^p.
\end{equation}
Therefore, we estimate by Minkowski's inequality
\begin{align*}
I_3=&p\int_0^{+\infty}\lambda^{p-1}\nu\big(\{|Tf_3|>\lambda/3\}\big)d\lambda
\lesssim B^p(1+\alpha\log A)^{p-\frac{p}q}\int_0^{+\infty}\lambda^{-1}\|f_3\|_{L^q}^{p}d\lambda\\
\lesssim& B^p(1+\alpha\log A)^{p-\frac{p}q}\bigg(\int_{\R^d}|f|^q\Big(\int_0^{+\infty}\lambda^{-1}\chi_{\frac{\lambda}3\le|f|\le A^\alpha
\lambda}d\la\Big)^\frac{q}{p}\bigg)^{\frac{p}{q}}\\
\lesssim& B^p(1+\alpha\log A)^{p-\frac{p}q}(\log A)\|f\|_{L^q}^p\\
\lesssim& B^p A^\varepsilon\|f\|_{L^q}^p.
\end{align*}
Collecting all these estimates on $I_1,I_2$ and $I_3$, we obtain
$$\|Tf\|_{L^p(S^{d-1})}^p\le I_1+I_2+I_3\lesssim (1+B^p A^\varepsilon)\|f\|_{L^q}^p.$$
This  concludes the lemma.

\end{proof}

\subsection*{Acknowledgments}
 The authors thank the referee and the
associated editor for their invaluable comments and suggestions
which helped improve the paper greatly.  This work is supported in part by the NSF of China under grant No.11171033, No.11231006, and No.11371059.   C. Miao is also supported  by Beijing Center for Mathematics and Information
Interdisciplinary Sciences.



\begin{thebibliography}{60}
{\small

%
%
%
\bibitem{ref Bourgain1}
J.~Bourgain,
\newblock {\em Besicovitch type maximal operators and applications to Fourier analysis},
\newblock {\em Geom. Funct. Anal.}, Vol. 1, No. 2 (1991) 145-187.


%
%
\bibitem{ref Bourgain2}
J.~Bourgain,
\newblock {\em On the dimension of Kakeya sets and related maximal inequalities},
\newblock {\em Geom. Funct. Anal.}, Vol. 9 (1999) 256-282.
%


\bibitem{ref Bourgain3}
J.~Bourgain,
\newblock {\em Harmonic analysis and combinatorics: how much may they contribute to each other?},
\newblock Mathematics: Frontiers and Perspectives
\newblock Inernational Mathematical Unions, 13-32(2000)

\bibitem{ref Bourgain Guth}
J.~Bourgain and  L.~Guth,
\newblock {\em Bounds on oscillatory integral operators based on multilinear estimates},
\newblock {\em Geom. Funct. Anal.}, 21(2011), 1239-1295.

\bibitem{ref Christ. D. R}
M.~Christ, J.~Duoandikoetxea and J.~L.~Rubio de Francia,
\newblock {\em Maximal operators assiciated to the Radom transform and the Calder\'{o}n-Zygmund method of rotations},
\newblock {\em Duke Math. J.} 53 (1986), 189-209.
%
\bibitem{ref Cordoba}
 A.~Cordoba,
\newblock {\em The Kakeya maximal function and spherical summation multipliers,}
\newblock {\em Amer. J. Math.}, 99(1977),1-22.
%

\bibitem{Dr}
S.~Drury,
\newblock  {\em $L^p$ estimates for the X-ray transform.}
\newblock {\em Illinois J. Math.} 27 (1983), 125-129.


\bibitem{ref K-T}
N.~Katz and T.~Tao.
\newblock {\em New bounds for Kakeya problems},
\newblock{\em J.Anal. Math.},87, 231. (2002)

%
\bibitem{ref K-T2}
N.~Katz and T.~Tao
\newblock{\em Recent progress on the Kakeya conjecture},
%
%
%
%
%
%

\bibitem{Lieb}
 E.~H. Lieb and M.~Loss,
\newblock{\em Analysis.}
\newblock{\em AMS Graduate Studies in
Mathematics,} Vol. 14 (1987, second edition 2001).

\bibitem{ref Sogge}
C. D.~Sogge.
\newblock{\em Concerning Nikodym-type sets in 3-dimensional curved spaces,}
\newblock{\em J. Amer. Math. Soc.} 12 (1999), 1-31.
%
\bibitem{ref Tao 1}
T.~Tao.
\newblock{\em Edinburg lecture notes on the Kakeya problems}.
%
%
\bibitem{ref Tao 2}
T.~Tao.
\newblock{\em The Bochner-Riesz conjecture implies the restriction conjecture,}
\newblock{\em Duke Math J. }96 (1999),363-376.
%
\bibitem{ref T-V-V}
T.~Tao, A.~Vargas and L. Vega,
\newblock{\em A bilinear approach to the restriction and Kakeya conjectures}.
\newblock{\em J. Amer. Math. Soc. }11 (1998), 967-1000.
%
%
%
%
\bibitem{ref wolff2}
T.~Wolff,
\newblock{\em  Recent work connected with the Kakeya problem},
\newblock Prospects in Mathematics
\newblock Princeton, NJ, 129-162, AMS, Providence, RI,1999.
%
\bibitem{ref Wolff}
T.~Wolff,
\newblock{\em An improved bound for Kakeya type maximal functions}.
\newblock{\em Revista Math. Iberoamericana} 11(1995), 651-674.

}
\end{thebibliography}
\end{document}